\documentclass[11pt]{amsart}
\usepackage{amssymb,amsmath,amsfonts,amsthm}
\usepackage[margin=1.1in]{geometry}
\usepackage{xcolor}
\usepackage{hyperref}
\hypersetup{colorlinks=true,linkcolor=blue!55!black,citecolor=blue!55!black,
 urlcolor=blue!55!black,
 pdftitle={Bounded curvature manifolds without Euclidean isometric immersions of bounded mean curvature},
 pdfauthor={Haoxuan Cheng},
 pdfkeywords={Isometric immersions, mean curvature, injectivity radius, Jacobian equations}}
\newtheorem{theorem}{Theorem}[section]
\newtheorem{lemma}[theorem]{Lemma}
\newtheorem{proposition}[theorem]{Proposition}
\theoremstyle{remark}
\newtheorem{remark}[theorem]{Remark}
\numberwithin{equation}{section}
\newcommand{\R}{\mathbb R}
\DeclareMathOperator{\Lip}{Lip}
\DeclareMathOperator{\Rm}{Rm}
\DeclareMathOperator{\Scal}{Scal}
\DeclareMathOperator{\inj}{inj}
\DeclareMathOperator{\tr}{tr}
\newcommand{\dd}{\mathrm d}
\title[Bounded curvature and isometric immersions]{Bounded curvature manifolds without Euclidean isometric immersions of bounded mean curvature}
\author{Haoxuan Cheng}
\address{School of Mathematical Sciences, Fudan University,
Shanghai 200433, China}
\email{hxcheng25@m.fudan.edu.cn}
\subjclass[2020]{Primary 53C42; Secondary 53C20, 53C21}
\keywords{Isometric immersions, mean curvature, bounded curvature,
injectivity radius, Jacobian equations}
\date{}
\begin{document}
\begin{abstract}
For every integer $n\ge2$, we construct a smooth complete Riemannian
metric $G_n$ on $\R^n$ with full curvature norm at most one and
injectivity radius at least one for which no $C^2$ isometric immersion
into a finite-dimensional Euclidean space has bounded mean curvature.
In dimension two, bounded second fundamental form would give uniformly
controlled finite Jacobian representations of the Laplacian of the
conformal factor. We construct disjoint conformal blocks for which the
curvature remains bounded while the finite Jacobian representation cost
tends to infinity. Taking Euclidean products gives all higher dimensions.
The result answers Yau's Problem~52 negatively under the stronger
assumption of bounded full curvature.
\end{abstract}
\maketitle
\section{Introduction}

Bounds on intrinsic curvature do not in themselves provide uniform
control of the extrinsic geometry of a Euclidean isometric immersion.
This paper gives examples for which the full Riemann tensor and the
injectivity radius are uniformly controlled, but the mean curvature of
every finite-dimensional Euclidean realization is unbounded.

Write $\delta_n$ for the Euclidean metric on $\R^n$. We take
Hilbert--Schmidt norms of the Riemann tensor $\Rm$ and of the
normal-vector-valued second fundamental form $A$. For an isometric
immersion $F:(M^n,g)\to\R^N$, the mean curvature vector is
$H=n^{-1}\tr_g A$, and
\[
 \inj(M,g)=\inf_{p\in M}\inj_g(p).
\]

\begin{theorem}\label{thm:jac-main}
For every integer $n\ge2$, there is a smooth complete Riemannian metric
$G_n$ on $\R^n$ such that
\[
 0.9\delta_n\le G_n\le1.1\delta_n,\qquad
 \sup_{\R^n}|\Rm_{G_n}|_{G_n}\le1,\qquad
 \inj(\R^n,G_n)\ge1,
\]
but no $C^2$ isometric immersion of $(\R^n,G_n)$ into any
finite-dimensional Euclidean space has bounded mean curvature.
In particular, there is no such immersion with bounded second fundamental
form, and no such isometric embedding.
\end{theorem}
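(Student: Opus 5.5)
We build $G_2$ as a conformal metric $e^{2u}\delta_2$ and then take $G_n=G_2\oplus\delta_{n-2}$. The first step is to reduce bounded mean curvature to bounded second fundamental form in dimension two, but only after averaging. For an isometric immersion $F:(\R^2,e^{2u}\delta_2)\to\R^N$, the Gauss equation writes the curvature as a quadratic expression in $A$:
\[
K=\langle A(e_1,e_1),A(e_2,e_2)\rangle-|A(e_1,e_2)|^2 ,
\]
so $2|H|^2 = K+\tfrac12|A|^2$ up to constants. Hence $|H|\le C$ together with $|K|\le1$ forces $|A|\le C'$. In conformal coordinates $\Delta F=2e^{2u}H$, and $-\Delta u=Ke^{2u}$. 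The Gauss equation can be rewritten as
\[
\Delta u=\sum_{\alpha}\det\!\big(D(\partial F^\alpha)\big)-\text{type terms},
\]
more precisely $-2\Delta u\, e^{-2u}$ equals a sum over normal components of $2\times2$ determinants of the Hessians of $F$ projected to the normal bundle. The useful form is the classical identity
\[
\Delta u = -\sum_{\alpha=1}^N \det D^2$-type Jacobians $J(f_\alpha,g_\alpha)$,
\]
with $N$ fixed and $\|f_\alpha\|_{C^1},\|g_\alpha\|_{C^1}$ controlled by $|A|$ and $|dF|$ on unit balls. Concretely, $e^{2u}K\,dx\wedge dy$ is a finite sum of $df_\alpha\wedge dg_\alpha$ with $f_\alpha,g_\alpha$ the components of $\partial_x F,\partial_y F$. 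I will state this as a lemma: bounded mean curvature gives, on each unit disk, a representation $\Delta u=\sum_{\alpha\le N}\{f_\alpha,g_\alpha\}$ with $\sum\|\nabla f_\alpha\|_\infty\|\nabla g_\alpha\|_\infty\le C(N,\sup|H|)$.

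The second step defines the obstruction. For $\rho\in C_c^\infty(B_1)$, let the $N$-term Jacobian cost $c_N(\rho)$ be the infimum of $\sum\Lip(f_\alpha)\Lip(g_\alpha)$ over representations $\rho=\sum_{\alpha\le N}\det(\nabla f_\alpha,\nabla g_\alpha)$ on $B_1$. I need a sequence $\rho_k$ with $\|\rho_k\|_{C^2}$ small, or at least with the resulting curvature bounded, such that $c_N(\rho_k)\to\infty$ for every fixed $N$. I will then place disjoint blocks $u=\epsilon\,\Delta^{-1}\rho_k$, localized by cutoffs, at centers $x_k\to\infty$.

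The main obstacle is finding such $\rho_k$ with $K$ bounded pointwise but unbounded cost. Bounded $L^\infty$ data is not always a finite Jacobian with Lipschitz factors: this is the Burago–Kleiner / McMullen phenomenon of $L^\infty$ densities with no bi-Lipschitz prescribed Jacobian, and the failure of $L^\infty\subset\mathcal H^1$-type Jacobian decompositions. I would try highly oscillatory checkerboard densities $\rho_k=\epsilon\,\mathrm{sign}$-patterns at scale $2^{-k}$, smoothed at scale $2^{-2k}$, combined across many scales. The pointwise size stays at $\epsilon$, which keeps the curvature bounded. A Jacobian sum with Lipschitz factors, after integration against test functions, satisfies a compensated-compactness bound: $\int \rho\,\phi$ is controlled by $\sum \Lip f\Lip g\,\|\phi\|$ in a weaker norm, such as $\dot W^{1,1}$ or BMO duality, by the Coifman–Lions–Meyer–Semmes estimate. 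Multiscale sums of such patterns have large $\mathcal H^1$-dual pairing. I would choose test functions adapted to all scales simultaneously, so that the pairing grows like $\sqrt k$ or $k$, forcing $c_N(\rho_k)\gtrsim k/N$.

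Finally, I check the intrinsic conditions. With small $\epsilon$, $|u|$ and $|\nabla u|$ are small, so $0.9\delta\le G\le1.1\delta$. The curvature $K=-e^{-2u}\Delta u=-e^{-2u}\rho_k$ is at most $1$, and the injectivity radius is at least $1$ by Klingenberg, since $K\le1$ and there are no short geodesic loops because the metric is close to flat with small $\nabla u$. Products with $\R^{n-2}$ preserve all three bounds. An immersion of the product with bounded $H$ restricts, via slices $\R^2\times\{y\}$, to one whose mean curvature needs care. Instead, I would run the Jacobian argument directly on the first two coordinates, since the Gauss equation for the $12$-plane still gives the representation.
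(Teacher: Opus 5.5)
Your overall architecture matches the paper's: disjoint conformal blocks with $u$, $Du$, $\Delta u$ uniformly bounded, a moving-frame identity turning a bound on $A$ into a uniform bound on the Jacobian cost of $\Delta u$, the Gauss equation to pass from $H$ to $A$, and Euclidean products. One correction on the frame identity: it must use the normalized frame $E_i=e^{-u}\partial_iF$, which gives $\Delta u=\sum_\alpha J(E_2^\alpha,E_1^\alpha)$. With the raw components of $F_x,F_y$ one gets instead $\sum_\alpha J(F_x^\alpha,F_y^\alpha)=-e^{2u}(\Delta u+2|Du|^2)$.

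\textbf{The genuine gap} is the step you yourself call the main obstacle. You never produce densities with $\|\rho_k\|_\infty$ bounded and Jacobian cost tending to infinity, and the mechanism you propose cannot do so. An estimate $|\int J(f,g)\phi|\le C\Lip(f)\Lip(g)N(\phi)$ only yields $\mathcal C(\rho)\ge|\int\rho\phi|/(CN(\phi))$. Here $N(\phi)$ is either $\|\phi\|_{\mathrm{BMO}(\R^2)}$ (CLMS plus $\mathcal H^1$--$\mathrm{BMO}$ duality) or $\|\nabla\phi\|_{L^1}$. For $\phi$ supported in a unit square, both norms dominate $\|\phi\|_{L^1}$ up to a constant; for the second this is the planar Sobolev inequality $\|\phi\|_{L^2}\lesssim\|\nabla\phi\|_{L^1}$. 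Meanwhile $|\int\rho\phi|\le\|\rho\|_\infty\|\phi\|_{L^1}$. So this method never gives a lower bound larger than $C\|\rho\|_\infty$, however many scales you superpose. Equivalently, a mean-zero density of size $\epsilon$ supported in a unit ball is $C\epsilon$ times an $\mathcal H^1$ atom, so no BMO pairing can grow like $k$.

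Oscillation by itself is also harmless. For example, $\sin\lambda x\sin\lambda y=J(x,-\lambda^{-1}\sin\lambda x\cos\lambda y)$ has cost at most one. More generally, $J(\int_0^x\rho,\,y)=\rho$ gives cost at most $\|\rho\|_\infty+\|\partial_y\rho\|_\infty$. So your wish for $\|\rho_k\|_{C^2}$ small is incompatible with unbounded cost.

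What is needed is an argument that uses the $L^\infty$ gradient bounds nonlinearly and uniformly in the number of summands. The paper does this in four steps.
\begin{enumerate}
\item Balance each term so that $\Lip(a_i)=\Lip(b_i)=L_i$ with $\sum_iL_i^2\le S$. Then $h(x)=(x,\pi_1(x),\ldots,\pi_m(x))$ is $\sqrt{1+2S}$-bilipschitz, independently of $m$.
\item A two-dimensional Burago--Kleiner-type dichotomy, with constants independent of the target dimension, applies on a thin rectangle of aspect ratio $K$. Either $h$ is within $r\varepsilon$ of a translation on some pair of adjacent squares, or some fine horizontal edge is stretched by a factor $1+\varphi$ beyond the rectangle's endpoint stretch.
\item Because stretches lie in $[1/L,L]$, descending a finite tree of nested thin rectangles reaches the first alternative within $k_0$ levels. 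On that pair, Stokes' formula and Cauchy--Schwarz give $|\int_Q\sigma-\int_{Q'}\sigma|\le4\sqrt{2S}\,r^2\varepsilon$.
\item The density is built with alternating values $1,2$ on the squares of every rectangle of the tree, with later levels occupying only a fraction $1/K$ of each square. Hence adjacent integrals differ by more than $\tfrac34r^2$, which contradicts step 3. This condition is weak-star open, so it survives mollification.
\end{enumerate}

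Two smaller repairs. For $n>2$, first bound all of $A_F$ by the $n$-dimensional scalar Gauss equation $\Scal_{G_n}=n^2|H|^2-|A_F|^2$, using $\Scal_{G_n}=2K_g$. Then restrict to the totally geodesic slice $\iota:\R^2\to\R^2\times\{0\}$, where $A_{F\circ\iota}$ is a restriction of $A_F$. Running the argument ``on the first two coordinates'' needs exactly this bound to control the frame's Lipschitz constants. Your Klingenberg argument for $\inj\ge1$ is fine in outline, since $|\gamma''|_\delta\le3\|DU\|_\infty|\gamma'|_\delta^2$ keeps geodesics of length below $2$ nearly straight.
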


The metric $G_n$ is fixed before the target dimension is chosen. Thus one
metric in each source dimension excludes every finite-dimensional target.
Nash's theorem still supplies a smooth Euclidean isometric embedding
\cite[Theorem~3]{Nash1956}; see also Greene's treatment of noncompact
manifolds~\cite{Greene1970}. The theorem says that the mean curvature of each
such embedding is unbounded on the source.

The result is related to a question of Yau, but its intrinsic assumptions
are stronger than those in the original question. Problem~52 in Yau's
1982 problem section asks whether bounded Ricci curvature and a positive
injectivity-radius lower bound suffice for a Euclidean isometric embedding
with bounded mean curvature \cite[p.~681]{Yau1982}. Since a bound on
$\Rm$ implies a bound on the Ricci tensor, Theorem~\ref{thm:jac-main}
answers that question negatively. It also applies to Yau's later
formulations with only a Ricci lower bound
\cite[pp.~244--245]{Yau2000}; see also \cite[Section~7]{Yau2012}.

For a metric with unbounded full curvature, the uncontracted Gauss equation
immediately rules out a bounded second fundamental form. That direct
obstruction is unavailable here because $|\Rm|\le1$. Instead, the argument
uses information not contained in pointwise curvature bounds: the complexity
of a Jacobian representation of the conformal curvature density.

Consider a square $Q\subset\R^2$ and a conformal metric
\[
 g=e^{2u}(\dd x^2+\dd y^2).
\]
With the Euclidean Laplacian
$\Delta=\partial_x^2+\partial_y^2$, its Gaussian curvature is determined by
\[
 \Delta u=-e^{2u}K_g.
\]
Suppose that $F:(Q,g)\to\R^N$ is a $C^2$ isometric immersion. The fields
\[
 E_1=e^{-u}F_x,\qquad E_2=e^{-u}F_y
\]
form an orthonormal tangent frame. The classical moving-frame identity
\cite[Chapter~5]{Helein2002}
\cite[equation~(1.7) and footnote~6]{Riviere2010} gives
\begin{equation}\label{eq:jac-intro-frame}
 \Delta u=\sum_{\alpha=1}^N
 J(E_2^\alpha,E_1^\alpha),\qquad
 J(a,b)=a_xb_y-a_yb_x.
\end{equation}
Jacobian equations of this kind are the source of Wente-type estimates and
also appear in compensated compactness~\cite{Wente1969,CLMS1993}.
The scalar function $\Delta u$ is also the curvature density, since
$K_g\,\dd\operatorname{vol}_g=-\Delta u\,\dd x\,\dd y$.

For a finite representation
$\rho=\sum_iJ(a_i,b_i)$ with $a_i,b_i\in\Lip(Q)$, its representation size is
\[
 \sum_i\Lip(a_i)\Lip(b_i).
\]
Section~\ref{sec:jac-analytic} defines the cost of $\rho$ as the infimum of
these sizes. Every function smooth on a neighborhood of the closed square
has finite cost, so mere representability is not an obstruction. The
relevant fact is that the cost cannot be bounded in terms of
$\|\rho\|_\infty$ alone.

We choose smooth densities $\rho_j$ with uniformly bounded amplitudes and
costs tending to infinity. Poisson potentials of these densities are cut
off and placed in pairwise disjoint planar regions. This produces a global
conformal factor whose value, gradient, and Laplacian are uniformly
bounded. Consequently the metric is complete and has bounded Gaussian
curvature. Jacobi-field and parallel-transport estimates give the required
injectivity-radius bound; see Section~\ref{sec:jac-metric}.

Now assume that this metric has a finite-dimensional isometric immersion
with bounded second fundamental form. The Gauss formula bounds the
Euclidean Lipschitz constants of the frame fields in
\eqref{eq:jac-intro-frame}. On every block, that identity then gives a
Jacobian representation of $\rho_j$ with one bound for its cost,
independent of $j$. This contradicts the choice of the densities. The
scalar Gauss equation
\[
 \Scal_g=n^2|H|^2-|A|^2
\]
converts bounded mean curvature into bounded second fundamental form
because the scalar curvature is bounded below
\cite[Theorem~8.5]{Lee2018}. Euclidean products preserve the obstruction
and prove the theorem in every dimension $n\ge2$.

The analytic construction in Section~\ref{sec:jac-analytic} is a finite-sum
version of the prescribed-Jacobian obstruction in
\cite[Proposition~1.5]{Takac2025}. It follows the finite-tree strategy in
\cite{BuragoKleiner1998,DKK2018,Takac2025}, together with a boundary estimate
that is uniform in the number of summands. McMullen's construction provides
related background for the single-Jacobian problem \cite{McMullen1998}. The
required two-dimensional bilipschitz dichotomy is proved directly in
Appendix~\ref{app:jac-dichotomy}. Dacorogna--Moser's existence theorem for
prescribed Jacobians \cite[Theorem~1]{DacorognaMoser1990} does not give a
cost bound from the amplitude alone.

The distinction between zeroth-order bounded geometry and derivative
bounds is important here. We use the former term for bounded full curvature
together with a positive injectivity-radius lower bound. The examples have
unbounded first curvature derivative. Petrunin's tubed-embedding theorem
instead assumes uniform bounds on every covariant derivative of curvature
\cite[Section~2]{Petrunin2024}. Section~\ref{sec:jac-local-scope} records the
curvature-derivative calculation and the corresponding failure of a
uniform fixed-radius estimate.

Section~\ref{sec:jac-analytic} proves the analytic obstruction.
Section~\ref{sec:jac-metric} constructs the planar metric, and
Section~\ref{sec:jac-immersion} proves the nonimmersion statement and its
extension by products. Section~\ref{sec:jac-local-scope} discusses local
estimates and the role of curvature derivatives.

\section{A finite-sum Jacobian obstruction}\label{sec:jac-analytic}

All Lipschitz constants in this section refer to Euclidean distances.
Set $Q_0=[0,1]^2$. Equalities between Jacobians of Lipschitz maps are
understood almost everywhere, using Rademacher's theorem
\cite[Section~3.1.2]{EvansGariepy1992}.

For $\rho\in L^\infty(Q_0)$, define its finite Jacobian representation
cost by
\[
 \mathcal C_{Q_0}(\rho)=
 \inf\left\{\sum_{i=1}^m\Lip(a_i)\Lip(b_i):
 \rho=\sum_{i=1}^mJ(a_i,b_i),\quad
 a_i,b_i\in\Lip(Q_0),\quad m<\infty\right\},
\]
with $\inf\varnothing=+\infty$. This cost refers to the fixed Euclidean
coordinates. For every $\rho$ smooth on a neighborhood of $Q_0$, set
\[
 P(x,y)=\int_0^x\rho(s,y)\,\dd s,\qquad T(x,y)=y.
\]
These functions are Lipschitz on $Q_0$ and satisfy $J(P,T)=\rho$.
Thus smooth densities always have a finite-cost representation.
The obstruction below is the absence of a bound for this cost in terms
of the amplitude alone.

The following is the finite, unweighted form of the obstruction stated
in~\cite[Proposition~1.5]{Takac2025}: take source dimension two and
constant weights equal to one. The proof below includes the smooth
extension with fixed support needed in Section~\ref{sec:jac-metric}.

\begin{proposition}\label{prop:jac-density}
For every $S>0$, there exists
$\widetilde\rho\in C_c^\infty((-1,2)^2)$ with
$\|\widetilde\rho\|_\infty\le2$ such that every representation
\[
 \widetilde\rho|_{Q_0}=\sum_{i=1}^m J(a_i,b_i),
 \qquad a_i,b_i\in\Lip(Q_0),\quad m<\infty,
\]
satisfies
\[
 \sum_{i=1}^m\Lip(a_i)\Lip(b_i)>S.
\]
\end{proposition}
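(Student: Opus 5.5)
We argue by contradiction and compactness, in the spirit of the Burago--Kleiner and McMullen constructions. Suppose the conclusion fails for some $S$: every admissible $\widetilde\rho$ with $\|\widetilde\rho\|_\infty\le2$ admits a representation on $Q_0$ of size at most $S$. The first step is to normalize such representations. Since $J(a,b)=J(\lambda a,\lambda^{-1}b)$ and $J$ is unchanged by adding constants, we may assume $\Lip(a_i)=\Lip(b_i)=:t_i$ and $a_i(0)=b_i(0)=0$, with $\sum t_i^2\le S$. Summands with small $t_i$ are harmless, but the number $m$ of summands is not controlled. To obtain a compact object we therefore pass to the single map
\[
\Phi=(a_1,b_1,\dots,a_m,b_m):Q_0\to\R^{2m}
\]
together with the constant $2$-form $\omega=\sum_i\dd a^i\wedge\dd b^i$ on $\R^{2m}$. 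Then $\rho=\Phi^*\omega/\dd x\wedge\dd y$, and the Lipschitz data are controlled in a weighted norm with $\sum t_i^2\le S$. The key observation is that the representation is described by a Lipschitz map into a Hilbert-type space, weighted by $t_i$, with uniformly bounded constant. This is where the bound must be uniform in $m$.

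Second, we construct $\widetilde\rho$ by a multiscale finite-tree argument. Fix a scale sequence $r_k=4^{-k}$ and a number of levels $L$ depending on $S$. Define $\rho$ on $Q_0$ as a checkerboard-type pattern taking the values $1$ and $2$, with nested refinements at each level, as in Burago--Kleiner's density $1+\chi$ on a nested family of squares. We then smooth this at a scale much smaller than $r_L$ and multiply by a cutoff equal to $1$ on $Q_0$ and supported in $(-1,2)^2$, keeping $\|\widetilde\rho\|_\infty\le2$. The obstruction then proceeds by induction down the tree. At each level we apply the bilipschitz dichotomy from Appendix~\ref{app:jac-dichotomy}. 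On a subsquare of size $r$, either the rescaled representation is close to affine, meaning each rescaled $(a_i,b_i)$ is nearly linear so that $\rho$ is nearly constant there, or some quantitative energy drop occurs. Concretely, the drop is that $\sum_i t_i^2$ computed on the subsquare, with Lipschitz constants measured by oscillation at that scale, decreases by a fixed amount $\varepsilon>0$. Near-affinity is incompatible with the density jumping between $1$ and $2$ across the children. Hence the energy decreases by $\varepsilon$ at each of $L$ levels, and choosing $L>S/\varepsilon$ gives a contradiction.

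The main obstacle is the dichotomy step uniformly in $m$. A finite sum of Jacobians that is nearly affine must be shown to produce a nearly constant density, and this needs a boundary estimate for $\int_{\partial Q}a_i\,\dd b_i$. By Stokes,
\[
\int_Q\rho=\sum_i\int_{\partial Q}a_i\,\dd b_i,
\]
and for nearly affine $a_i,b_i$ each boundary term is controlled by $t_i^2$ times the deviation from affinity. Summing with $\sum t_i^2\le S$ gives a bound independent of $m$. The plan for this step is to measure non-affinity by the $L^\infty$ distance to the best affine approximation at scale $r$, summed with weights $t_i^2$. A compactness argument on rescaled $1$-Lipschitz pairs then produces $\varepsilon$ depending only on the density contrast and not on $m$. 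If needed, we use Rademacher's theorem and the a.e.\ differentiability of the $(a_i,b_i)$ to locate subsquares where the affine approximation is good.
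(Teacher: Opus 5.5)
Your normalization ($\Lip(a_i)=\Lip(b_i)=t_i$, $\sum t_i^2\le S$) and the Stokes/Cauchy--Schwarz boundary estimate summed with weights $t_i^2$ agree with the paper (Lemma~\ref{lem:jac-boundary}). The step that drives the descent, however, fails as stated. You claim that if the representation is not nearly affine on a square, then $\sum_i t_i^2$, measured on some child, drops by a fixed $\varepsilon$. This is false. The single $1$-Lipschitz function $b(x,y)=\operatorname{dist}(x,\tfrac r4\mathbb Z)$ is at $L^\infty$ distance at least $r/16$ from every affine function on a square of side $r$. Yet every aligned child of side $r/4$ contains both slopes $\pm1$, so no child Lipschitz constant decreases, and the normalized oscillation actually increases. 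Sup-type Lipschitz constants do not accumulate across scales, so the bound $\sum t_i^2\le S$ gives no budget of $S/\varepsilon$ levels.

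You also cannot invoke Lemma~\ref{lem:jac-dichotomy} here, for three reasons. It requires a bilipschitz map, and $\Phi=(\pi_1,\dots,\pi_m)$ can collapse, e.g.\ if every $\pi_i$ depends only on $x$. It requires thin rectangles of aspect ratio $K\ge K_0(L,\varepsilon)$, not squares nested at ratio $4$. And its alternatives are not the ones you state. Finally, compactness cannot give constants uniform in $m$. Maps into $\R^{2m}$ with $m$ unbounded do not form a compact family, and per-pair compactness does not produce one square that is good simultaneously for the weighted sum.

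The missing idea is the paper's identity-adjoining step. Set $h(x)=(x,\pi_1(x),\dots,\pi_m(x))$; then $|x-y|\le|h(x)-h(y)|\le\sqrt{1+2S}\,|x-y|$ for every $m$. The monotone quantity is not an energy that drops but the endpoint stretch $T_h(R)=|h(p+c\mathbf e_1)-h(p)|/c$ of a thin rectangle, which is trapped in $[1/L,L]$ by the two bilipschitz bounds. Lemma~\ref{lem:jac-dichotomy} gives two alternatives. Either some adjacent pair $Q,\,Q+r\mathbf e_1$ in $R$ has increments $\pi_i(x+r\mathbf e_1)-\pi_i(x)$ uniformly close, in the summed Euclidean sense, to constants $W_i$. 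Or some child rectangle has stretch larger by the factor $1+\varphi$. The stretch can grow this way fewer than $k_0$ times, where $(1+\varphi)^{k_0}>L^2$, which bounds the depth of the tree.

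Note that the lemma yields only translation-closeness on one adjacent pair, not affine closeness. So the boundary estimate must compare $J(a_i,b_i)$ with the Jacobian of the translate $\pi_i(\cdot+r\mathbf e_1)-W_i$. The density must also be designed so that adjacent squares along every rectangle keep integrals differing by more than $\tfrac34r^2$ after all finer overwriting. The paper achieves this with the alternating $1,2$ pattern and children occupying area fraction $1/K$ of each square. It handles the mollification by weak-star openness of these finitely many strict inequalities, which your ``smooth at a much smaller scale'' leaves unjustified.
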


In particular, the proposition gives
$\mathcal C_{Q_0}(\widetilde\rho|_{Q_0})\ge S$.
No attainment of the infimum is asserted; the proof uses the strict
lower bound for each actual representation.

The next boundary estimate controls all summands through the sum of
their squared Lipschitz constants. It follows from Stokes' formula and
Cauchy--Schwarz, with an approximation argument for Lipschitz maps.

\begin{lemma}\label{lem:jac-boundary}
Let $Q,Q'=Q+\tau$ be adjacent squares of side length $r$.
Suppose $\pi_i=(a_i,b_i)$, $1\le i\le m$, are Lipschitz maps on
$Q\cup Q'$, both scalar components having Lipschitz constant at most $L_i$,
and $\sum_iL_i^2\le S$. Let $W_i\in\R^2$ be constant vectors. If
\[
 \sup_{x\in Q}\sum_i
 |\pi_i(x+\tau)-\pi_i(x)-W_i|^2\le r^2\varepsilon^2,
\]
then, for $\sigma=\sum_iJ(a_i,b_i)$,
\[
 \left|\int_Q\sigma-\int_{Q'}\sigma\right|
 \le4\sqrt{2S}\,r^2\varepsilon.
\]
\end{lemma}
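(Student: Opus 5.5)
The plan is to reduce to smooth maps and then use Stokes' formula on $Q$. First assume each $\pi_i$ is $C^1$ on a neighborhood of $Q\cup Q'$. For $C^1$ maps, $J(a,b)\,\dd x\wedge\dd y=\dd a\wedge\dd b=\dd(a\,\dd b)$, so
\[
 \int_{Q'}J(a_i,b_i)-\int_Q J(a_i,b_i)=\int_Q J(\tilde a_i,\tilde b_i)-\int_QJ(a_i,b_i),
\]
where $\tilde\pi_i(x)=\pi_i(x+\tau)$. Put $\delta_i=\tilde\pi_i-\pi_i-W_i=(\alpha_i,\beta_i)$. Adding the constant $W_i$ does not change a Jacobian, so $J(\tilde a_i,\tilde b_i)=J(a_i+\alpha_i,b_i+\beta_i)$. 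Expanding by bilinearity gives
\[
 J(\tilde a_i,\tilde b_i)-J(a_i,b_i)=J(\alpha_i,b_i)+J(a_i,\beta_i)+J(\alpha_i,\beta_i).
\]
Each term is exact: $J(f,g)=\dd(f\,\dd g)$, or $-\dd(g\,\dd f)$. I will always choose the primitive that puts the small factor outside the differential. Thus $J(\alpha_i,b_i)$ integrates to $\oint_{\partial Q}\alpha_i\,\dd b_i$, $J(a_i,\beta_i)$ to $-\oint_{\partial Q}\beta_i\,\dd a_i$, and $J(\alpha_i,\beta_i)$ to $\oint_{\partial Q}\alpha_i\,\dd \beta_i$.

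Next I sum over $i$ and estimate pointwise on $\partial Q$, which has length $4r$. Along the boundary, with unit tangent $t$, the sum of the first two integrands is bounded by Cauchy--Schwarz:
\[
 \sum_i\big(|\alpha_i||\partial_tb_i|+|\beta_i||\partial_ta_i|\big)\le\Big(\sum_i|\delta_i|^2\Big)^{1/2}\Big(\sum_i(|\partial_t a_i|^2+|\partial_t b_i|^2)\Big)^{1/2}\le r\varepsilon\sqrt{2S}.
\]
Here I use $|\alpha_i||\partial_tb_i|+|\beta_i||\partial_ta_i|\le|\delta_i|(|\partial_ta_i|^2+|\partial_tb_i|^2)^{1/2}$ and $|\partial_t a_i|,|\partial_t b_i|\le L_i$. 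Integrating over the boundary, these two terms contribute at most $4r\cdot r\varepsilon\sqrt{2S}=4\sqrt{2S}\,r^2\varepsilon$.

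The main obstacle is the quadratic term $J(\alpha_i,\beta_i)$, since the target constant leaves no room for it. I would handle it by folding it in rather than estimating it separately. Write
\[
 J(\tilde a_i,\tilde b_i)-J(a_i,b_i)=J(\alpha_i,\tilde b_i-W_i^2)+J(a_i,\beta_i),
\]
where $W_i^2$ is the second component of $W_i$. This is still exact bilinear algebra. The first term integrates to $\oint\alpha_i\,\dd\tilde b_i$, and $\tilde b_i$ is also $L_i$-Lipschitz. Cauchy--Schwarz then applies exactly as before, with $|\partial_t\tilde b_i|\le L_i$, and yields precisely $4\sqrt{2S}\,r^2\varepsilon$.

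Finally, I remove the smoothness assumption by approximation. I mollify $\pi_i$, after a slight Lipschitz extension to a neighborhood, for instance by McShane extension componentwise, which keeps the constant $L_i$. Mollification preserves the Lipschitz bounds and commutes with translation, so it preserves the hypothesis up to an arbitrarily small error in $\varepsilon$. The Jacobians of the mollified maps converge in $L^1(Q\cup Q')$, because the gradients converge a.e. and are uniformly bounded, so dominated convergence applies. Passing to the limit gives the inequality for Lipschitz maps.
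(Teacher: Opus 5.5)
Your proposal is correct and is essentially the paper's proof. Your folded identity $J(\tilde a_i,\tilde b_i)-J(a_i,b_i)=J(\alpha_i,\tilde b_i)+J(a_i,\beta_i)$ is the mirror image of the paper's $J(a,b)-J(c,d)=J(a-c,b)+J(c,b-d)$: both keep the small factor outside the differential, so no quadratic term appears, and both finish with pointwise Cauchy--Schwarz on $\partial Q$ and the length $4r$. The only difference is the regularity step, and both versions are valid. You mollify and pass to the limit in the final inequality, absorbing an $O(\eta)$ error into $\varepsilon$; this works because the mollified maps are uniformly $O(L_i\eta)$-close to the originals, which matters since the hypothesis only holds on $Q$. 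The paper instead first proves the Stokes identity for Lipschitz maps and then applies it directly.
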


\begin{proof}
For two Lipschitz maps $\pi=(a,b)$ and $\widehat\pi=(c,d)$ on $Q$,
Stokes' formula and integration by parts on the closed boundary give
\begin{equation}\label{eq:jac-stokes}
 \int_Q[J(a,b)-J(c,d)]
 =\int_{\partial Q}(a-c)\,\dd b-(b-d)\,\dd c.
\end{equation}
The boundary is oriented counterclockwise, and derivatives there are
tangential weak derivatives along its four edges.

For clarity, the formula remains valid at the stated regularity.
Extend each scalar function to the plane without increasing its Lipschitz
constant, by the scalar extension theorem
\cite[Section~3.1.1, Theorem~1]{EvansGariepy1992}, and mollify.
The approximations converge uniformly and have
uniformly bounded first derivatives. The identity
\[
 \int\psi J(a,b)=-\int a(\psi_xb_y-\psi_yb_x)
 \qquad(\psi\in C_c^\infty)
\]
shows convergence of their Jacobians in distributions. The uniform
$L^\infty$ bound on the Jacobians extends this convergence to $L^1$
test functions, including the indicator of $Q$.
Along each edge, uniform convergence and the uniform Lipschitz bound imply
weak-star convergence of tangential derivatives. Consequently the boundary
integrals also converge. Passing to the limit in the smooth Stokes formula
proves~\eqref{eq:jac-stokes}.

Apply this formula with
$\widehat\pi_i(x)=\pi_i(x+\tau)-W_i$ and
$\xi_i=\pi_i-\widehat\pi_i$. Translation does not change a Jacobian, and each
scalar component of $\widehat\pi_i$ has Lipschitz constant at most $L_i$.
Thus
\begin{align*}
 \left|\int_Q\sigma-\int_{Q'}\sigma\right|
 &\le\int_{\partial Q}\sum_iL_i(|\xi_i^1|+|\xi_i^2|)\,\dd s\\
 &\le\sqrt{2S}\int_{\partial Q}
       \left(\sum_i|\xi_i|^2\right)^{1/2}\dd s\\
 &\le4\sqrt{2S}\,r^2\varepsilon.
\end{align*}
The Cauchy--Schwarz inequality is applied inside the boundary integral.
\end{proof}

The following two-dimensional special case of
\cite[Lemma~3.3]{DKK2018} is proved directly in
Appendix~\ref{app:jac-dichotomy}. Here $L$-bilipschitz means that distances are
bounded below by $1/L$ and above by $L$ times the source distance.
We write $\mathbf e_1=(1,0)$.

\begin{lemma}\label{lem:jac-dichotomy}
For every $L\ge1$ and $0<\varepsilon<1$, there are
integers $M\ge1$, $K_0\ge2$ and $\varphi\in(0,1)$ such that, for every
integer $K\ge K_0$, $c>0$, and finite integer $N\ge2$, any
$L$-bilipschitz map
\[
 h:[0,c]\times[0,c/K]\longrightarrow\R^N
\]
satisfies at least one of the following alternatives. Put $r=c/K$.
First, for some $i\in\{0,\ldots,K-2\}$,
\begin{equation}\label{eq:jac-dichotomy-good}
 \left|h(x+r\mathbf e_1)-h(x)-\frac{h(c\mathbf e_1)-h(0)}K\right|
 \le r\varepsilon
 \quad\text{for all }x\in[ir,(i+1)r]\times[0,r].
\end{equation}
Second, there is a point
\[
 z\in\frac c{KM}\mathbb Z^2
 \cap\bigl([0,c-c/(KM)]\times[0,c/K-c/(KM)]\bigr)
\]
such that
\begin{equation}\label{eq:jac-dichotomy-growth}
 \frac{|h(z+c\mathbf e_1/(KM))-h(z)|}{c/(KM)}
 >(1+\varphi)\frac{|h(c\mathbf e_1)-h(0)|}c.
\end{equation}
The constants depend only on $L$ and $\varepsilon$, and may be chosen
as in~\eqref{eq:jac-explicit-parameters}.
\end{lemma}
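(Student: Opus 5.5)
We prove Lemma~\ref{lem:jac-dichotomy} by contradiction, using a quantitative strict-convexity (rigidity) argument: if no edge-step grows by more than a factor $1+\varphi$ over the average slope, then the path from $h(0)$ to $h(c\mathbf e_1)$ is almost a straight segment traversed at almost constant speed, and translation by $r\mathbf e_1$ acts as almost the constant vector $D/K$ on some block.

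By scaling we may take $c=K$, so $r=1$ and the grid step is $\delta=1/M$. Write $D=h(c\mathbf e_1)-h(0)$ and $\lambda=|D|/c$. Bilipschitz gives $\lambda\in[1/L,L]$. Suppose the growth alternative~\eqref{eq:jac-dichotomy-growth} fails, so every horizontal grid step $z\to z+\delta\mathbf e_1$ with $z$ in the grid has $|h(z+\delta\mathbf e_1)-h(z)|\le(1+\varphi)\lambda\delta$.

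\textbf{Step 1 (bottom edge is nearly straight).} Sum the increments along the bottom row from $0$ to $c\mathbf e_1$. There are $KM$ steps, each of length at most $(1+\varphi)\lambda\delta$, and their vector sum is $D$, of length $\lambda c$. Put $u=D/|D|$ and let $v_j$ be the steps. Then $\sum_j(|v_j|-\langle v_j,u\rangle)\le\varphi\lambda c$, and $\sum_j(\lambda\delta(1+\varphi)-|v_j|)\le\varphi\lambda c$. The elementary inequality $|v-\lambda\delta u|^2\lesssim\lambda\delta\,(|v|-\langle v,u\rangle)+\lambda\delta\,\bigl||v|-\lambda\delta\bigr|$ then gives
\[
\sum_j|v_j-\lambda\delta u|^2\lesssim\varphi\lambda^2\delta c .
\]
Average over the $K$ consecutive unit blocks and pick the block $i$ carrying at most the average error. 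Cauchy--Schwarz over the $M$ steps in that block shows that $h(x+\mathbf e_1)-h(x)$ differs from $D/K$ by $O(\sqrt\varphi\,L)$ on bottom grid points of block $i$. The same bound holds for the next block, using a union/averaging argument that discards an $O(1/K)$ proportion of blocks. This is why we need $K\ge K_0$ and $i\le K-2$.

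\textbf{Step 2 (propagate to all rows).} Every horizontal row $y=k\delta$ also has step lengths at most $(1+\varphi)\lambda\delta$. Its total displacement differs from $D$ by at most $2L$, since the endpoints are within vertical distance $1$ of the bottom endpoints. Taking $K_0\gtrsim L^2/(\varphi\lambda)$, each row is also nearly straight with direction close to $u$. Running the averaging jointly over all $M+1$ rows, we choose one block index $i$ that is good for every row simultaneously: the total error is a sum of $M+1$ row errors, each bounded as above, and Markov's inequality picks out a good $i$. This produces the grid-point estimate with error $C(L)\sqrt{\varphi M}$ or similar. Care is needed to keep this dependence on $M$ harmless, and it may require choosing $\varphi$ after $M$.

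\textbf{Step 3 (from grid to all of $[i,i+1]\times[0,1]$).} For general $x$, use the Lipschitz bound with the nearest grid point. This adds an error of $2L\sqrt2/M$. Choose $M\ge 8L/\varepsilon$, then $\varphi$ small depending on $L,\varepsilon,M$ so that the grid error is at most $\varepsilon/2$, and then $K_0$. Together these yield~\eqref{eq:jac-dichotomy-good}.

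The main obstacle is Step~2. The estimate must be uniform in $N$ and must hold on a whole block rather than only on average. The delicate point is ordering the constants $M$, $\varphi$, $K_0$ so that the row-endpoint discrepancy $2L$ is negligible relative to $\varphi\lambda c$. I would first try $\varphi=\varepsilon^2/(C L^4 M)$ with $M\approx L/\varepsilon$ and $K_0\approx L^2/\varphi$.
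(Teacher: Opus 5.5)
Your proposal is correct and is essentially the paper's proof. Assuming the growth alternative fails, you bound each row's squared deviation of the fine increments from $\lambda\delta u=\delta\bar v$, then sum over rows, pigeonhole an adjacent pair of blocks, apply Cauchy--Schwarz along rows, and fill in with the Lipschitz bound. Your splitting $|v-\lambda\delta u|^2=(|v|-\lambda\delta)^2+2\lambda\delta(|v|-\langle v,u\rangle)$ is a variant of the paper's inner-product expansion, the row-endpoint error $2L$ is absorbed by $K_0$, and $\varphi\sim\varepsilon^2/(ML^2)$ is chosen after $M$, just as in the paper.

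One small correction: the failure of the growth alternative controls only rows $k=0,\dots,M-1$, because the top row $y=r$ lies outside the allowed grid. So average over those $M$ rows rather than $M+1$. Your Step~3 bound $|x-z|\le\sqrt2\,\delta$ already handles the top edge through row $M-1$.
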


The cited result treats general source dimension and gives a proportion
of good adjacent pairs. Here one pair in source dimension two suffices.

The proof adapts the finite-tree strategy of the prescribed-Jacobian
constructions in~\cite{BuragoKleiner1998,DKK2018,Takac2025}.
We keep the tree finite and use Lemma~\ref{lem:jac-boundary} to control
the entire sum, independently of its length.

\begin{proof}[Proof of Proposition~\ref{prop:jac-density}]
Fix $S>0$ and set
\[
 L=\sqrt{1+2S},\qquad
 \varepsilon=\min\{1/4,1/(16\sqrt{2S})\}.
\]
Choose $M,K_0,\varphi$ from Lemma~\ref{lem:jac-dichotomy}. Choose an integer
$k_0\ge1$ with $(1+\varphi)^{k_0}>L^2$, and then an integer
$K\ge\max\{K_0,16,2\}$. All choices precede the choice of any Jacobian
representation.

We construct a finite tree of thin rectangles inside $Q_0$.
Its root is $R_0=[0,1]\times[0,1/K]$.
For a rectangle $R=p+[0,c]\times[0,c/K]$, put $r=c/K$ and partition it
into the $K$ squares
\[
 Q_i=q_i+[0,r]^2,\qquad q_i=p+(ir,0),\quad0\le i<K.
\]
Inside each $Q_i$, place the $M^2$ child rectangles
\begin{equation}\label{eq:jac-children}
 R_{i,s,t}=q_i+\frac rM(s,t)
       +[0,r/M]\times[0,r/(KM)],\qquad0\le s,t<M.
\end{equation}
These lie in distinct cells of the $M$-by-$M$ subdivision of $Q_i$,
have disjoint interiors, and have total area $r^2/K$ in that square.
Each has the same aspect ratio $K$. Repeat the construction through
level $k_0$. At level $k$, the long side is $c_k=(KM)^{-k}$ and the
square side is $r_k=c_k/K$.

Every growth alternative~\eqref{eq:jac-dichotomy-growth} selects a child
in this tree. Indeed, relative to the lower-left vertex of $R$, write the
selected grid point as $(r/M)(u,v)$, where
$0\le u<KM$ and $0\le v<M$. Writing $u=iM+s$ identifies precisely the
child $R_{i,s,v}$. Its horizontal endpoints are the two points in
\eqref{eq:jac-dichotomy-growth}.

For an $L$-bilipschitz map $h:Q_0\to\R^N$, define the endpoint stretch
of $R=p+[0,c]\times[0,c/K]$ by
\[
 T_h(R)=\frac{|h(p+c\mathbf e_1)-h(p)|}c\in[1/L,L].
\]
Apply the dichotomy at the root and descend whenever only the growth
alternative holds. If this continued for $k_0$ steps, it would give
$T_h(R)>(1+\varphi)^{k_0}/L>L$ at the last rectangle, a contradiction.
Thus some rectangle at a level $k<k_0$ has an adjacent pair of squares
satisfying~\eqref{eq:jac-dichotomy-good} after translation.

We next prescribe a density on the entire tree. Set it equal to zero
outside $R_0$. At level zero, assign the values $1,2,1,2,\ldots$ to the
$K$ root squares. At each successive level through $k_0$, overwrite the
density on each rectangle by the same alternating values on its own
$K$ squares. Values on boundaries may be chosen arbitrarily.
Let $\rho_*$ be the final density; then $0\le\rho_*\le2$.

Consider adjacent squares $Q,Q'$ of side $r$ in any rectangle of the
tree. Immediately after that level is assigned, the absolute difference
of their integrals is $r^2$. Subsequent changes occur only in the union of
the next-level rectangles. This union has area at most $r^2/K$ in each
square, and the final value differs from the earlier value by at most one.
Consequently
\begin{equation}\label{eq:jac-density-gap}
 \left|\int_Q\rho_* -\int_{Q'}\rho_*\right|
 \ge(1-2/K)r^2\ge\tfrac78r^2>\tfrac34r^2.
\end{equation}
This estimate compares the final value with the value at the chosen
level; it does not sum an error over subsequent levels.

Let $\mathcal U\subset L^\infty(Q_0)$ consist of the functions for which
the last strict inequality in~\eqref{eq:jac-density-gap} holds for every
adjacent pair in the tree. The tree is finite, and square indicators
belong to $L^1(Q_0)$. Hence $\mathcal U$ is weak-star open and contains
$\rho_*$. Extend $\rho_*$ by zero to the plane and convolve with a
nonnegative unit-mass mollifier of radius $\delta$. As $\delta\downarrow0$,
the restrictions converge weak-star to $\rho_*$ on $Q_0$ and remain bounded
between zero and two. Choose $0<\delta<1/2$ sufficiently small that the
restriction belongs to $\mathcal U$. The resulting function
$\widetilde\rho$ is smooth, compactly supported in $(-1,2)^2$, and has
the required amplitude bound.

Suppose its restriction had a finite representation of cost at most $S$.
Delete zero terms with a constant component. Rescale the two components
of each remaining term by reciprocal positive constants so that
\[
 \Lip(a_i)=\Lip(b_i)=L_i,
 \qquad\sum_iL_i^2\le S.
\]
Explicitly, multiply $a_i$ by
$\sqrt{\Lip(b_i)/\Lip(a_i)}$ and $b_i$ by its reciprocal. The Jacobian
does not change. Following the identity-adjoining device
in~\cite[p.~8]{Takac2025}, set $\pi_i=(a_i,b_i)$ and
\[
 h(x)=(x,\pi_1(x),\ldots,\pi_m(x))\in\R^{2+2m}.
\]
Then
\[
 |x-y|^2\le|h(x)-h(y)|^2\le(1+2S)|x-y|^2,
\]
so $h$ is $L$-bilipschitz. If all terms were deleted, use $h(x)=x$.
The preceding tree argument supplies a rectangle
$R=p+[0,c]\times[0,c/K]$ and adjacent squares $Q,Q'=Q+r\mathbf e_1$ for which
\eqref{eq:jac-dichotomy-good} holds. Projecting to the $\pi_i$ components
and putting $W_i=K^{-1}(\pi_i(p+c\mathbf e_1)-\pi_i(p))$ gives
\[
 \sup_{x\in Q}\sum_i
 |\pi_i(x+r\mathbf e_1)-\pi_i(x)-W_i|^2\le r^2\varepsilon^2.
\]
Lemma~\ref{lem:jac-boundary} therefore bounds the difference of the two
density integrals by $4\sqrt{2S}\,r^2\varepsilon\le r^2/4$.
Membership in $\mathcal U$ requires that difference to exceed $3r^2/4$.
This contradiction proves the proposition.
\end{proof}

\section{A complete conformal metric with bounded curvature}
\label{sec:jac-metric}

This section converts the analytic densities into a single complete
metric. The potential estimates use the classical planar fundamental
solution~\cite[Section~2.2.1]{Evans1998}; the cutoff, placement of the
blocks, and uniform injectivity estimate are given explicitly below.
We prescribe the metric directly in global conformal coordinates.
For coordinate regularity of a given metric, see
\cite[Sections~2 and~4]{DeTurckKazdan1981}.

Apply Proposition~\ref{prop:jac-density} with $S=j$ for each integer
$j\ge1$. This gives functions
\[
 \widetilde\rho_j\in C_c^\infty((-1,2)^2),\qquad
 \|\widetilde\rho_j\|_\infty\le2,
 \qquad \rho_j=\widetilde\rho_j|_{Q_0},
\]
such that every finite Jacobian representation of $\rho_j$ has cost
greater than $j$. Only the amplitudes and supports are uniformly
controlled; no bounds on derivatives of these densities are imposed.

Let $\Phi(x)=(2\pi)^{-1}\log|x|$, so that $\Delta\Phi=\delta_0$
in distributions, and set $v_j=\Phi*\widetilde\rho_j$.
These functions are smooth and satisfy $\Delta v_j=\widetilde\rho_j$.
On $\Omega=(-2,3)^2$, they obey the uniform estimates
\begin{equation}\label{eq:jac-potential}
 \|v_j\|_{L^\infty(\Omega)}\le C_0,
 \qquad \|Dv_j\|_{L^\infty(\Omega)}\le C_1,
\end{equation}
where one may take
\[
 C_0=\frac1\pi\int_{B(0,6)}|\log|z||\,\dd z,
 \qquad C_1=\frac1\pi\int_{B(0,6)}|z|^{-1}\,\dd z.
\]
Indeed, $|x-y|<6$ for $x\in\Omega$ and
$y\in\operatorname{supp}\widetilde\rho_j$, and both kernels in these
integrals are locally integrable in dimension two.

Choose a fixed cutoff $0\le\chi\le1$, with
$\chi\in C_c^\infty(\Omega)$ and $\chi=1$ on $[-1,2]^2$.
Put $M_1=\|D\chi\|_\infty$, $M_2=\|\Delta\chi\|_\infty$, and fix
\begin{align*}
 C&=\max\{1,C_0,C_1+M_1C_0,2+2M_1C_1+M_2C_0\},\\
 a&=(1000C)^{-1},\qquad b=Ca=10^{-3}.
\end{align*}
The functions $u_j=a\chi v_j$, extended by zero outside $\Omega$, are
smooth. The product rule and~\eqref{eq:jac-potential} give
\begin{equation}\label{eq:jac-block}
 \|u_j\|_\infty\le b,\qquad
 \|Du_j\|_\infty\le b,\qquad
 \|\Delta u_j\|_\infty\le b,
 \qquad \Delta u_j=a\rho_j\quad\text{on }Q_0.
\end{equation}
In particular, the positive parameter $a$ is fixed independently of $j$.

For $p_j=(10j,0)$, define
\begin{equation}\label{eq:jac-global}
 U(x)=\sum_{j=1}^\infty u_j(x-p_j),\qquad
 g=e^{2U}(\dd x^2+\dd y^2).
\end{equation}
The supports are pairwise disjoint and locally finite. Hence $U$ is
smooth, and each of $\|U\|_\infty$, $\|DU\|_\infty$, and
$\|\Delta U\|_\infty$ is at most $b$. Consequently
\begin{equation}\label{eq:jac-intrinsic}
 e^{-2b}\delta_2\le g\le e^{2b}\delta_2,
 \qquad K_g=-e^{-2U}\Delta U,
 \qquad |\Rm_g|_g=2|K_g|\le2e^{2b}b<1.
\end{equation}
The first comparison implies $0.9\delta_2\le g\le1.1\delta_2$ and
completeness: the induced distance is uniformly equivalent to the complete
Euclidean distance.

The next proof is a quantitative application of parallel transport,
the Jacobi equation, and Hopf--Rinow; see
\cite[Theorems~6.19 and~10.1, Corollary~6.21]{Lee2018}.
We prove the uniform estimate for these particular metrics.

\begin{proposition}\label{prop:jac-inj}
The metric~\eqref{eq:jac-global} satisfies $\inj(\R^2,g)\ge1$.
\end{proposition}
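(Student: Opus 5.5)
The plan is to use Klingenberg's lemma in its standard form for complete manifolds. For each $p\in\R^2$, either $\inj_g(p)$ equals the distance to the first conjugate point along some geodesic from $p$, or there is a geodesic loop at $p$ of length $2\inj_g(p)$. Completeness of $g$ comes from the comparison in \eqref{eq:jac-intrinsic}, and Hopf--Rinow then lets us realize $\inj_g(p)$ by a minimizing geodesic to a nearest cut point. So it suffices to prove two things. First, there are no conjugate points along $g$-geodesics within $g$-length $1$. Second, there are no nonconstant geodesic loops of $g$-length less than $2$.

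\emph{Conjugate points.} By \eqref{eq:jac-intrinsic}, $K_g\le\kappa:=e^{2b}b$, which is of size roughly $10^{-3}$. Take a normal Jacobi field $J=fE$ along a unit-speed geodesic, where $E$ is a parallel unit normal. It satisfies $f''+K_gf=0$ with $f(0)=0$ and $f'(0)=1$. Sturm comparison with $\sin(\sqrt\kappa\,t)/\sqrt\kappa$ shows that $f$ has no zero on $(0,\pi/\sqrt\kappa)$. Since $\pi/\sqrt\kappa$ is much larger than $1$, there is no conjugate point within distance $1$ of $p$.

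\emph{Short loops.} This is the main step. Here I would use the explicit geodesic equation of a conformal metric in Euclidean coordinates. Parametrize a $g$-geodesic $\gamma$ by Euclidean arclength $s$, and let $T=\gamma'(s)$ be its Euclidean unit tangent. The geodesic equation for $e^{2U}\delta$ then reduces to the statement that the Euclidean curvature vector equals the normal component of the gradient:
\[
 \frac{\dd T}{\dd s}=(DU)^{\perp}=DU-\langle DU,T\rangle T .
\]
Consequently $|\dd T/\dd s|\le\|DU\|_\infty\le b$.

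Now suppose $\gamma$ is a loop with $g$-length $\ell<2$. Its Euclidean length is at most $e^{b}\ell<2e^{b}$. Along the whole loop, the tangent therefore turns by an angle at most $2e^bb<\pi/4$, so $\langle T(s),T(0)\rangle>\cos(\pi/4)>0$ for all $s$. Integrating this inequality gives $\langle\gamma(s)-\gamma(0),T(0)\rangle>0$ for every $s>0$. The curve therefore cannot return to $\gamma(0)$, which rules out the loop.

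Combining the two steps gives $\inj_g(p)\ge\min\{\pi/\sqrt\kappa,\,1\}=1$ for every $p$. The main obstacle I expect is stating and justifying the geodesic-curvature identity cleanly, either by computing the Christoffel symbols $\Gamma^k_{ij}=\delta^k_iU_j+\delta^k_jU_i-\delta_{ij}U_k$ and reparametrizing, or by first-variation. The other delicate point is using the precise form of Klingenberg's lemma, which needs the metric to be complete; the comparison in \eqref{eq:jac-intrinsic} supplies this. Everything else is routine, since the constant $b=10^{-3}$ leaves generous slack.
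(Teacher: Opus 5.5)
Your argument is correct, but it takes a different route from the paper.

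\textbf{The paper's route.} The paper never uses the cut locus. It transports Jacobi fields back to $T_p\R^2$ by parallel transport and writes the Jacobi equation as a Volterra equation. It bounds parallel transport by Gronwall, using $|\Gamma(v,w)|_\delta\le3b|v|_\delta|w|_\delta$. This gives $\|D\exp_p(v)-I\|_\delta\le56b<\tfrac12$ on the Euclidean tangent ball $B_\delta(0,2)$. Integrating that bound shows $\exp_p$ is injective there, and a minimizing-geodesic argument with Hopf--Rinow finishes.

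\textbf{Your route.} You apply Klingenberg's lemma at a nearest cut point. You rule out conjugate points by Sturm comparison, using only $K_g\le e^{2b}b$. You rule out short geodesic loops with the identity $\dd T/\dd s=(DU)^\perp$, which bounds the total Euclidean turning of a loop of $g$-length $<2$ by $2e^bb$.

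Both steps check out:
\begin{itemize}
\item Your Christoffel symbols agree with the paper's bilinear form $\Gamma$.
\item The concatenated loop from Klingenberg's lemma is a genuine geodesic through the cut point, which is what your turning argument needs.
\end{itemize}

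\textbf{What each buys.} Your version is shorter and gives a much stronger bound, roughly $\min\{\pi/\sqrt\kappa,\pi/(4be^b)\}\approx 99$. It also uses the conformal structure neatly, since only $\|DU\|_\infty$ enters the loop step. The cost is reliance on the structural Klingenberg lemma and the closedness of the cut locus.

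The paper's version uses only the Jacobi equation, parallel transport and Hopf--Rinow. It also proves something stronger than an injectivity-radius bound: $\exp_p$ is uniformly close to the identity in the global coordinates. The paper reuses exactly this for the product metrics $G_n$, through the splitting $\exp^{G_n}_{(p,z)}(v,w)=(\exp^g_p(v),z+w)$. With your approach, the product case would instead go through $\inj(M\times\R^k)=\inj(M)$.

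Both arguments carry over unchanged to the rescaled metrics of Section~\ref{sec:jac-local-scope}, since $\|DU_t\|_\infty\le bt\le b$.
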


\begin{proof}
All tangent spaces are identified with $\R^2$ using the global coordinates.
For Euclidean vectors $v,w$, the Christoffel bilinear form is
\[
 \Gamma(v,w)=\langle DU,v\rangle w+
 \langle DU,w\rangle v-\langle v,w\rangle DU,
 \qquad |\Gamma(v,w)|_\delta\le3b|v|_\delta|w|_\delta.
\]
Fix $p$ and $v\in T_p\R^2$ with $|v|_\delta<2$. The geodesic
$\gamma(t)=\exp_p(tv)$, $0\le t\le1$, exists by completeness.
Its constant $g$-speed is $\ell=|v|_{g_p}\le2e^b$, and
$|\gamma'(t)|_\delta\le e^b\ell\le2e^{2b}$.

Vary the initial velocity through $v+sw$. The resulting Jacobi field
$\mathcal J$ satisfies
\[
 \mathcal J(0)=0,\qquad D_t\mathcal J(0)=w,\qquad \mathcal J(1)=D\exp_p(v)w.
\]
Let $\mathcal P(t)$ be parallel transport along $\gamma$ and put $Z(t)=\mathcal P(t)^{-1}\mathcal J(t)$.
The Jacobi equation, integrated twice in the fixed space $(T_p\R^2,g_p)$,
becomes
\begin{equation}\label{eq:jac-volterra}
 Z(t)=tw-\int_0^t(t-s)\mathcal R(s)Z(s)\,\dd s,
 \qquad
 \|\mathcal R(s)\|_{g_p}\le\ell^2|\Rm_g|_g
 \le8e^{4b}b=:\kappa.
\end{equation}
Since $\kappa\le16b<1$, the quantity
$M_Z=\sup_{0\le t\le1}|Z(t)|_{g_p}$ satisfies
$M_Z\le|w|_{g_p}+(\kappa/2)M_Z$, so $M_Z\le2|w|_{g_p}$.
Equation~\eqref{eq:jac-volterra} then gives
$|Z(1)-w|_{g_p}\le\kappa|w|_{g_p}$.
Thus the map $\mathcal Q_v:w\mapsto Z(1)$ obeys
$\|\mathcal Q_v-I\|_\delta\le\kappa$; the operator norms for $g_p$ and
$\delta$ agree because $g_p$ is a scalar multiple of $\delta$.

In coordinates, $\mathcal P'=-\mathsf B(t)\mathcal P$, $\mathcal P(0)=I$, and
\[
 \|\mathsf B(t)\|_\delta\le3b|\gamma'(t)|_\delta
 \le6e^{2b}b=:\Lambda\le12b.
\]
Gronwall's inequality gives $\|\mathcal P(1)\|_\delta\le e^{\Lambda}$ and
$\|\mathcal P(1)-I\|_\delta\le e^{\Lambda}-1$. Since $D\exp_p(v)=\mathcal P(1)\mathcal Q_v$,
\begin{equation}\label{eq:jac-dexp}
 \|D\exp_p(v)-I\|_\delta
 \le e^{\Lambda}\kappa+e^{\Lambda}-1
 \le2(16b)+2(12b)=56b<\tfrac12.
\end{equation}
Here $\Lambda\le0.012$ allows $e^{\Lambda}\le2$ and $e^{\Lambda}-1\le2\Lambda$.
At $v=0$, the differential is the identity.

Integrating~\eqref{eq:jac-dexp} along a straight segment in the convex
Euclidean tangent ball $B_\delta(0,2)$ yields
\[
 |\exp_p(v)-\exp_p(w)-(v-w)|_\delta
 \le\tfrac12|v-w|_\delta.
\]
The exponential map is therefore injective on this ball, and its
differential is nonsingular there. The $g_p$-unit ball lies inside it,
since $|v|_\delta\le e^b|v|_{g_p}$.

For $|v|_{g_p}<1$, let $q=\exp_p(v)$. A minimizing geodesic from $p$
to $q$ exists by completeness. Its initial velocity $w$, with affine
parameter interval $[0,1]$, satisfies
\[
 \exp_p(w)=q,\qquad |w|_{g_p}=d_g(p,q)\le|v|_{g_p}<1.
\]
Injectivity in the coordinate tangent ball gives $w=v$. Thus every radial
geodesic in the $g_p$-unit ball is minimizing. Conversely, every point
of $B_g(p,1)$ is reached by such a minimizing geodesic. It follows that
$\exp_p$ maps the $g_p$-unit ball diffeomorphically onto $B_g(p,1)$.
All estimates are independent of $p$, proving the proposition.
\end{proof}

\section{The immersion obstruction and products}
\label{sec:jac-immersion}

The identity in the next lemma is classical; compare
\cite[equation~(1.7) and footnote~6]{Riviere2010} and the moving-frame
background in~\cite[Chapter~5]{Helein2002}. We include its derivation
for $C^2$ immersions and the quantitative Lipschitz estimate used here.

\begin{lemma}\label{lem:jac-frame}
Let $u$ be smooth on a neighborhood of $Q_0$, and let $F$ be a $C^2$
isometric immersion of that neighborhood, equipped with $e^{2u}\delta_2$,
into $\R^N$.
Set $E_1=e^{-u}F_x$ and $E_2=e^{-u}F_y$. Then
\begin{equation}\label{eq:jac-frame}
 \Delta u=\sum_{\alpha=1}^N J(E_2^\alpha,E_1^\alpha).
\end{equation}
If $|u|,|Du|_\delta\le b$ and $|A_F|_g\le A_0$, then
\[
 \Lip(E_i^\alpha)\le b+e^bA_0
 \qquad(i=1,2,\ 1\le\alpha\le N).
\]
\end{lemma}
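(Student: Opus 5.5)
The identity \eqref{eq:jac-frame} follows from the isometry relations and the Gauss formula, and the Lipschitz bound from the Gauss formula alone. Throughout write $\langle\cdot,\cdot\rangle$ for the Euclidean inner product in $\R^N$. Isometry means $|F_x|^2=|F_y|^2=e^{2u}$ and $\langle F_x,F_y\rangle=0$, so $E_1,E_2$ are orthonormal. The frame fields are only $C^1$, because $F\in C^2$. I therefore avoid third derivatives of $F$ and work with the connection one-form instead.

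\textbf{Step 1: the connection form.} Set $\omega=\langle \dd E_1,E_2\rangle$. It is a continuous one-form, and $\sum_\alpha \dd E_1^\alpha\wedge \dd E_2^\alpha$ is continuous as well. From $\Gamma$ for the conformal metric, or by differentiating the isometry relations (which needs only second derivatives of $F$), one gets the tangential components
\[
\langle\partial_xE_1,E_2\rangle=-u_y,\qquad \langle\partial_yE_1,E_2\rangle=u_x .
\]
Hence $\omega=-u_y\,\dd x+u_x\,\dd y$ and $\dd\omega=\Delta u\,\dd x\wedge \dd y$. Formally $\dd\omega=\sum_\alpha \dd E_1^\alpha\wedge \dd E_2^\alpha+\langle E_1,\dd \dd E_2\rangle$, and the last term vanishes. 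To justify this at $C^1$ regularity, I use the Stokes formula~\eqref{eq:jac-stokes} from Lemma~\ref{lem:jac-boundary}, or mollify $F$ and pass to the limit.

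\textbf{Step 2: removing the normal part.} Decompose $\sum_\alpha \dd E_1^\alpha\wedge \dd E_2^\alpha$ into tangential and normal parts. The tangential part is $\langle \dd E_1,E_1\rangle\wedge\langle \dd E_2,E_1\rangle+\langle \dd E_1,E_2\rangle\wedge\langle \dd E_2,E_2\rangle$, which is zero since $\langle \dd E_i,E_i\rangle=0$. What remains is $\sum_\nu A^\nu(E_1,\cdot)\wedge A^\nu(E_2,\cdot)$, and by the Gauss equation this equals $\pm K_g e^{2u}\,\dd x\wedge \dd y=\mp\Delta u\,\dd x\wedge \dd y$.

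The cleanest route is to compute $\sum_\alpha J(E_2^\alpha,E_1^\alpha)=\langle\partial_xE_2,\partial_yE_1\rangle-\langle\partial_yE_2,\partial_xE_1\rangle$ directly. I split each derivative into its tangential part, given by Step~1, and its normal part, which is $e^{u}A(\partial_\cdot\text{-direction},E_i)$. The tangential contributions cancel. The normal contributions give $e^{2u}\bigl(\langle A_{21},A_{12}\rangle-\langle A_{22},A_{11}\rangle\bigr)=-e^{2u}K_g=\Delta u$ by the Gauss equation. This pointwise computation uses only first derivatives of $E_i$, so no third derivatives appear. The main obstacle is getting the signs right; I would check them against the round sphere.

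\textbf{Step 3: the Lipschitz bound.} Differentiate $E_1=e^{-u}F_x$. Since $F_x=e^{u}E_1$ and $D_{\partial_x}$ corresponds to $e^{u}$ times the unit direction, the Gauss formula gives
\[
\partial_xE_1=-u_xE_1+(\text{tangential connection term})+e^{u}A(E_1,E_1).
\]
Combined with Step~1, this yields $\partial_vE_i=\omega(v)(\pm E_j)+e^uA(\hat v,E_i)|v|$ for Euclidean unit $v$, where $\hat v$ is the $g$-unit vector in direction $v$. The tangential term has size at most $|Du|\le b$, and the normal term at most $e^bA_0$. Each component $E_i^\alpha$ therefore has gradient at most $b+e^bA_0$ on the square. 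Since $Q_0$ is convex, this gives $\Lip(E_i^\alpha)\le b+e^bA_0$.
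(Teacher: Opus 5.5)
\textbf{Assessment.} There is a gap in how you prove the identity for a $C^2$ immersion, but the fix is already in your Step~1. Your Step~3 is the paper's Lipschitz argument. Step~1 contains the paper's proof of~\eqref{eq:jac-frame}, except for a sign slip that you never resolve.

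\textbf{The sign slip in Step~1.} Since $\omega=\langle\dd E_1,E_2\rangle=\sum_\alpha E_2^\alpha\,\dd E_1^\alpha$, one has $\dd\omega=\sum_\alpha\dd E_2^\alpha\wedge\dd E_1^\alpha$. The expression $\sum_\alpha\dd E_1^\alpha\wedge\dd E_2^\alpha+\langle E_1,\dd\dd E_2\rangle$ that you wrote is the differential of $\langle E_1,\dd E_2\rangle=-\omega$. So Step~1 as written gives $\Delta u=-\sum_\alpha J(E_2^\alpha,E_1^\alpha)$, which contradicts your own Step~2. With the sign corrected, Step~1 already proves the identity, and Step~2 is unnecessary. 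Your $C^1$ justification works as proposed. Formula \eqref{eq:jac-stokes} with $c=d=0$ gives $\int_Q\sum_\alpha J(E_2^\alpha,E_1^\alpha)=\int_{\partial Q}\omega=\int_Q\Delta u$ for every subsquare $Q$, and both integrands are continuous. The paper instead mollifies the $C^1$ fields and takes exterior derivatives in distributions; either way works.

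\textbf{The gap in Step~2.} The route you actually adopt is the pointwise computation in Step~2. It has the correct sign and is valid for a smooth immersion, but at $C^2$ regularity it is not justified. It invokes the Gauss equation $K_g=\langle A_{11},A_{22}\rangle-|A_{12}|^2$. The classical proof of that equation differentiates the Gauss formula once more, so it uses third derivatives of $F$. Your own computation shows that, in dimension two and given the Gauss-formula splitting (which is valid for $C^2$ maps), the Gauss equation is pointwise equivalent to~\eqref{eq:jac-frame}. Citing it therefore only moves the regularity issue the lemma is careful about into a black box. Your remark that ``no third derivatives appear'' is accurate for the algebra but not for the equation it relies on.

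\textbf{How to repair it.} To keep the Step~2 route, you would need to prove the Gauss equation for $C^2$ immersions separately, for example by mollifying $F$ and passing to the limit as in the proof of Lemma~\ref{lem:jac-scalar}. The simpler repair is to drop Step~2 and finish with the sign-corrected Step~1.
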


\begin{proof}
The fields $e_1=e^{-u}\partial_x$ and $e_2=e^{-u}\partial_y$ form an
orthonormal frame for $g=e^{2u}\delta_2$. Their images $E_1,E_2$ are
$C^1$ orthonormal vector fields along the immersion. The conformal
connection formulas are
\[
 \nabla_{\partial_x}e_1=-u_y e_2,
 \qquad\nabla_{\partial_y}e_1=u_xe_2.
\]
The Gauss formula therefore gives
\[
 \omega:=\langle\dd E_1,E_2\rangle=-u_y\,\dd x+u_x\,\dd y.
\]
On the other hand, $\omega=\sum_\alpha E_2^\alpha\,\dd E_1^\alpha$.
Taking exterior derivatives gives
\[
 (\Delta u)\,\dd x\wedge\dd y
 =\sum_\alpha\dd E_2^\alpha\wedge\dd E_1^\alpha,
\]
which is~\eqref{eq:jac-frame}. For $C^1$ fields this computation holds
in distributions: local mollification converges in $C^1$, so the products
on the right converge uniformly. Both sides of the final identity are
continuous. No third derivatives of $F$ are needed.

For a Euclidean unit vector $v$, the Gauss formula also gives
\[
 \dd E_1(v)=\omega(v)E_2+A_F(v,e_1).
\]
Since $|v|_g=e^u$, we obtain
$|\dd E_1(v)|\le b+e^bA_0$. The same estimate holds for $E_2$.
Integrating along line segments in the convex square $Q_0$ proves the
Lipschitz estimates for the vector fields and their scalar components.
\end{proof}

Applying the frame estimate to the metric of
Section~\ref{sec:jac-metric} turns the analytic cost bound into an
obstruction to bounded second fundamental form.

\begin{proposition}\label{prop:jac-no-A}
The metric $g$ in~\eqref{eq:jac-global} admits no $C^2$ isometric immersion
into any finite-dimensional Euclidean space with bounded second
fundamental form.
\end{proposition}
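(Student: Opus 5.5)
The argument is by contradiction. Suppose $F:(\R^2,g)\to\R^N$ is a $C^2$ isometric immersion with $\sup|A_F|_g\le A_0<\infty$. The idea is to restrict $F$ to each block, read off a Jacobian representation of $\rho_j$ whose cost is bounded independently of $j$, and compare this with Proposition~\ref{prop:jac-density} applied with $S=j$.

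\emph{Step 1: restrict to a block.} Fix $j$ and translate coordinates by $p_j$. Set $F_j(x)=F(x+p_j)$, defined on a neighborhood of $Q_0$, say $(-1,2)^2$. On $(-2,3)^2$ the supports of the other $u_k(\cdot-p_k)$ do not meet $p_j+\Omega$, because the blocks are $10$ apart. Hence $U(x+p_j)=u_j(x)$ there, and $F_j$ is a $C^2$ isometric immersion of $e^{2u_j}\delta_2$ on a neighborhood of $Q_0$. Its second fundamental form still satisfies $|A_{F_j}|_g\le A_0$, and \eqref{eq:jac-block} gives $|u_j|,|Du_j|\le b$.

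\emph{Step 2: apply Lemma~\ref{lem:jac-frame}.} This gives
\[
 \Delta u_j=\sum_{\alpha=1}^N J(E_2^\alpha,E_1^\alpha)\quad\text{on }Q_0,
\]
with $\Lip(E_i^\alpha)\le b+e^bA_0=:\Lambda_0$, a bound independent of $j$. By \eqref{eq:jac-block}, $\Delta u_j=a\rho_j$ on $Q_0$. Since $J$ is bilinear, dividing by $a$ gives
\[
 \rho_j=\sum_\alpha J\bigl(a^{-1}E_2^\alpha,E_1^\alpha\bigr),
\]
a finite representation with Lipschitz components. Its size is at most $N a^{-1}\Lambda_0^2$. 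The constant $a$ was fixed independently of $j$, and $N$ is fixed by the immersion. Therefore
\[
 \mathcal C_{Q_0}(\rho_j)\le Na^{-1}(b+e^bA_0)^2
\]
for every $j$.

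\emph{Step 3: contradiction.} Choose $j>Na^{-1}(b+e^bA_0)^2$. Proposition~\ref{prop:jac-density} with $S=j$ says that every finite representation of $\rho_j$ has size greater than $j$. This contradicts the representation found in Step 2.

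The only delicate point is Step 1. We need the translated block to see exactly $u_j$ on a neighborhood of $Q_0$, so that the hypotheses of Lemma~\ref{lem:jac-frame} hold with constants uniform in $j$. We also need the Lipschitz bounds to be Euclidean on the convex square, which is what the lemma provides. Once the disjointness of supports is checked, everything else is a direct combination of the lemma and the choice of $\rho_j$.
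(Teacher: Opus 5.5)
Your proposal is correct and follows the paper's proof: restrict to the block $p_j+Q_0$, use Lemma~\ref{lem:jac-frame} to obtain an actual Jacobian representation of $\rho_j$ of size at most $Na^{-1}(b+e^bA_0)^2$, and contradict Proposition~\ref{prop:jac-density} with $S=j$. The differences are cosmetic: you put the factor $a^{-1}$ on one component rather than $a^{-1/2}$ on each, which gives the same product of Lipschitz constants, and you spell out the disjointness of the translated supports, which the paper leaves implicit.
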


\begin{proof}
Suppose $F:(\R^2,g)\to\R^N$ were such an immersion and
$\sup|A_F|=A_0<\infty$. Restrict it to $p_j+Q_0$ and translate the
domain back to $Q_0$. The metric there is $e^{2u_j}\delta_2$.
By~\eqref{eq:jac-block} and Lemma~\ref{lem:jac-frame},
\[
 \rho_j=\sum_{\alpha=1}^N
 J(a^{-1/2}E_2^\alpha,a^{-1/2}E_1^\alpha).
\]
The cost of this actual representation is at most
$Na^{-1}(b+e^bA_0)^2$. Proposition~\ref{prop:jac-density}, with $S=j$,
therefore forces
\[
 j<\frac Na(b+e^bA_0)^2\qquad\text{for every }j\ge1.
\]
The right side is fixed, whereas $j$ is unbounded. This contradiction
proves the proposition for every finite $N$.
\end{proof}

The same argument gives a local quantitative statement. For any $C^2$
isometric immersion into $\R^N$ defined on a neighborhood of $p_j+Q_0$,
\begin{equation}\label{eq:jac-A-lower}
 \sup_{p_j+Q_0}|A_F|
 >e^{-b}\left(\sqrt{\frac{aj}{N}}-b\right).
\end{equation}
In particular, the obstruction is a lower bound valid for all immersions,
not the failure of an estimate for a particular construction.

The following is the scalar contraction of the classical Gauss equation
\cite[Theorem~8.5]{Lee2018}. The proof also supplies the approximation
argument at the $C^2$ regularity used in this paper.

\begin{lemma}\label{lem:jac-scalar}
Let $(M^n,g)$ be smooth with scalar curvature bounded below. Every $C^2$
Euclidean isometric immersion with bounded mean curvature has bounded
second fundamental form.
\end{lemma}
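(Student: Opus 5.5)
The plan is to prove the pointwise identity $\Scal_g=n^2|H|^2-|A|^2$ for a $C^2$ isometric immersion $F:(M^n,g)\to\R^N$, and then read off the bound. Granting the identity, if $\Scal_g\ge-\sigma$ and $|H|\le H_0$, then
\[
 |A|^2=n^2|H|^2-\Scal_g\le n^2H_0^2+\sigma,
\]
so $A$ is bounded.

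For the identity itself, work at a point $p$ with a $g$-orthonormal basis $e_1,\dots,e_n$ of $T_pM$. Write $A(e_i,e_j)\in N_pM$ for the second fundamental form, defined as the normal component of the second derivative $D^2F(e_i,e_j)$. This needs only $C^2$. The uncontracted Gauss equation is
\[
 \Rm(e_i,e_j,e_j,e_i)=\langle A(e_i,e_i),A(e_j,e_j)\rangle-|A(e_i,e_j)|^2 .
\]
Summing over $i,j$ gives
\[
 \Scal_g=\Bigl|\sum_iA(e_i,e_i)\Bigr|^2-\sum_{i,j}|A(e_i,e_j)|^2=|\tr_gA|^2-|A|^2=n^2|H|^2-|A|^2 .
\]

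The main obstacle is regularity. The textbook derivation of the Gauss equation \cite[Theorem~8.5]{Lee2018} is written for smooth immersions. For $C^2$ immersions it also differentiates the tangential projection of $D^2F$ once more, which requires third derivatives of $F$. I would handle this in one of two ways.

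\emph{Weak formulation.} Argue in local coordinates, where $g_{ij}=\langle\partial_iF,\partial_jF\rangle$ is a smooth metric and $F\in C^2$. The Christoffel symbols of $g$ satisfy $\Gamma_{ij}^k g_{kl}=\langle\partial_i\partial_jF,\partial_lF\rangle$, and this uses only second derivatives of $F$. The Brioschi-type formula expresses $R_{ijkl}$ through second derivatives of $g_{ij}$. Those involve third derivatives of $F$, but only in the combination
\[
 \partial_k\langle\partial_i\partial_jF,\partial_lF\rangle-\partial_l\langle\partial_i\partial_jF,\partial_kF\rangle .
\]
In this combination the third-order terms cancel formally. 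To justify the cancellation at $C^2$ regularity, I would read it in the sense of distributions against a test function.

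\emph{Approximation, which I would try first.} Mollify $F$ in a chart to get $F_\varepsilon\to F$ in $C^2_{\mathrm{loc}}$. Each $F_\varepsilon$ is a smooth immersion for small $\varepsilon$, with induced metric $g_\varepsilon=F_\varepsilon^*\delta$. Apply the smooth Gauss equation to $F_\varepsilon$. Its right-hand side involves only $D^2F_\varepsilon$ and $DF_\varepsilon$, so it converges uniformly to the corresponding expression for $F$. Its left-hand side, the curvature of $g_\varepsilon$, converges in distributions to the curvature of $g$, because $g_\varepsilon\to g$ in $C^1$ and the curvature is a divergence of $C^0$ quantities plus quadratic terms in first derivatives. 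The distributional limit equals the smooth function $\Rm_g$, and the limit of the right-hand side is continuous. Hence the identity holds pointwise.

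With the identity established, the displayed bound finishes the argument.
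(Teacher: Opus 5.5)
Your proposal is correct and follows essentially the paper's proof: you establish the scalar Gauss equation $\Scal_g=n^2|H|^2-|A|^2$ at $C^2$ regularity by mollifying $F$ in a chart, using uniform convergence of the second fundamental forms and distributional convergence of the curvature of the $C^1$-convergent induced metrics, and then read off $|A|^2\le n^2H_0^2+\sigma$. One small slip occurs in your alternative weak-formulation route, which your primary approximation argument does not need: in $\partial_k\langle\partial_i\partial_jF,\partial_lF\rangle-\partial_l\langle\partial_i\partial_jF,\partial_kF\rangle$ the second-order terms cancel while the third-order ones survive, and the combination in which third derivatives do cancel is $\partial_k\langle\partial_i\partial_jF,\partial_lF\rangle-\partial_j\langle\partial_i\partial_kF,\partial_lF\rangle$.
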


\begin{proof}
The scalar Gauss equation, with $H=n^{-1}\tr_g A$, is
\begin{equation}\label{eq:jac-scalar-gauss}
 \Scal_g=n^2|H|^2-|A|^2.
\end{equation}
We recall why it applies to $C^2$ immersions. Mollify the coordinate
functions of the immersion on a relatively compact coordinate ball.
On a smaller ball the resulting smooth maps remain immersions and converge
in $C^2$. Their induced metrics converge to $g$ in $C^1$; the orthogonal
normal projections and the second fundamental forms converge uniformly.
In fixed coordinates, the fully covariant curvature tensor is a linear
combination of second derivatives of the metric plus an expression
continuous in the inverse metric and its first derivatives. It therefore
converges in distributions. Passing to the limit in the smooth Gauss
equation gives the pointwise identity for the limiting immersion, since
both sides are continuous. Contracting with $g$ proves
\eqref{eq:jac-scalar-gauss}.

If $\Scal_g\ge-S_0$ and $|H|\le H_0$, this identity yields
$|A|^2\le n^2H_0^2+S_0$.
\end{proof}

\begin{proof}[Proof of Theorem~\ref{thm:jac-main}]
For $n=2$, use $g$ from~\eqref{eq:jac-global}.
Its smoothness, completeness, and metric and curvature bounds were
proved in Section~\ref{sec:jac-metric}; its injectivity radius is at least
one by Proposition~\ref{prop:jac-inj}.
Since $\Scal_g=2K_g$ is bounded below,
Lemma~\ref{lem:jac-scalar} and Proposition~\ref{prop:jac-no-A} exclude
bounded mean curvature.

For $n>2$, take the Riemannian product
\begin{equation}\label{eq:jac-product}
 G_n=g\oplus\delta_{n-2}
 =e^{2U(x_1,x_2)}(\dd x_1^2+\dd x_2^2)
       +\sum_{k=3}^n\dd x_k^2.
\end{equation}
The connection splits and all mixed curvature components vanish.
Thus $|\Rm_{G_n}|_{G_n}=|\Rm_g|_g\le1$ and
$\Scal_{G_n}=2K_g$. Completeness and the metric comparison are preserved.
The exponential map splits as
\[
 \exp^{G_n}_{(p,z)}(v,w)=(\exp^g_p(v),z+w).
\]
It is injective with nonsingular differential on the $G_n$-unit tangent
ball, because $|v|_g<1$ there. Completeness and the minimizing-geodesic
argument from Proposition~\ref{prop:jac-inj} give $\inj(G_n)\ge1$.

Suppose an immersion $F:(\R^n,G_n)\to\R^N$ had bounded mean curvature.
Lemma~\ref{lem:jac-scalar} first gives a bound for its entire second
fundamental form. The inclusion $\iota:(\R^2,g)\to(\R^n,G_n)$,
$\iota(p)=(p,0)$, is totally geodesic. Hence the second fundamental form
of $F\circ\iota$ satisfies
\[
 A_{F\circ\iota}(X,Y)=A_F(\dd\iota X,\dd\iota Y),
 \qquad |A_{F\circ\iota}|\le |A_F|\circ\iota.
\]
This contradicts Proposition~\ref{prop:jac-no-A}. If $N<n$, no isometric
immersion exists by the rank condition. The proof covers all finite
target dimensions.
\end{proof}

\begin{remark}
The metric in~\eqref{eq:jac-product} is a product, not the conformal
metric $e^{2U}\delta_n$. The latter would involve individual second
derivatives of $U$ in its higher-dimensional curvature tensor, whereas
our estimates only control $U$, $DU$, and $\Delta U$.
Likewise, the argument restricts the full second fundamental form to a
totally geodesic slice only after applying the scalar Gauss equation;
a mean-curvature bound cannot in general be restricted directly.
\end{remark}

\section{Local existence and uniform curvature estimates}
\label{sec:jac-local-scope}

Every smooth Riemannian manifold admits a smooth isometric embedding into
some finite-dimensional Euclidean space by Nash's theorem
\cite[Theorem~3]{Nash1956}. If $F$ is such an embedding and $V$ has compact
closure in the manifold, then
\[
 \sup_V|A_F|\le\max_{\overline V}|A_F|<\infty.
\]
This is a bound for a fixed embedding on a fixed region. On a complete
manifold one may take $V$ to be any ball of finite radius, since its closure
is compact. Thus even the metrics constructed here have local embeddings
with bounded second fundamental form. Theorem~\ref{thm:jac-main} says that
no choice of a global finite-dimensional immersion makes these bounds
uniform over the whole manifold, even with a bound depending on the metric.

The local lower bound~\eqref{eq:jac-A-lower} also rules out a curvature
estimate on a fixed radius that depends only on zeroth-order bounded
geometry. Here this term means a bound for the full curvature tensor and
a positive lower bound for the injectivity radius; no curvature derivative
bounds are included.
This convention is weaker than the bounded-geometry definition with
bounds for all covariant curvature derivatives used, for example, in
\cite[Definition~2.2]{Schick2001} and~\cite[Section~2]{Petrunin2024}.

\begin{proposition}\label{prop:jac-local-nonuniform}
For every $r_0>0$, there exists a smooth complete metric $g_{r_0}$ on
$\R^2$ satisfying
\[
 0.9\delta_2\le g_{r_0}\le1.1\delta_2,\qquad
 \sup|\Rm_{g_{r_0}}|\le1,\qquad \inj(g_{r_0})\ge1,
\]
with the following property. For every finite $N\ge2$ and every $C_A>0$,
there is a point $q$ such that every $C^2$ isometric immersion
$F:B_{g_{r_0}}(q,r_0)\to\R^N$ satisfies
\[
 \sup_{B_{g_{r_0}}(q,r_0)}|A_F|>C_A.
\]
\end{proposition}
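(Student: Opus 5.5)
The plan is to rescale the construction of Section~\ref{sec:jac-metric} so that each conformal block fits inside a metric ball of radius $r_0$. If $r_0\ge 5$ or so, the metric $g$ from \eqref{eq:jac-global} already works: each translated square $p_j+Q_0$ lies in $B_g(q_j,r_0)$ for $q_j=p_j+(1/2,1/2)$, since $g\le1.1\delta_2$ and $Q_0$ has Euclidean diameter $\sqrt2$. Restricting an immersion of the ball to a neighborhood of $p_j+Q_0$, the local bound \eqref{eq:jac-A-lower} gives $\sup|A_F|>e^{-b}(\sqrt{aj/N}-b)$. Given $N$ and $C_A$, we choose $j$ so large that this exceeds $C_A$.

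For small $r_0$, we instead use blocks of Euclidean size $\lambda\le r_0/4$. Put $u_j^\lambda(x)=u_j(x/\lambda)$, so that $Du_j^\lambda=\lambda^{-1}Du_j(\cdot/\lambda)$ and $\Delta u_j^\lambda=\lambda^{-2}\Delta u_j(\cdot/\lambda)$. To keep the bounds of \eqref{eq:jac-block}, we multiply by $\lambda^2$ and define
\[
U_\lambda(x)=\sum_j\lambda^2u_j\bigl((x-\lambda p_j)/\lambda\bigr).
\]
Then $\|U_\lambda\|_\infty\le\lambda^2b$, $\|DU_\lambda\|_\infty\le\lambda b$, and $\|\Delta U_\lambda\|_\infty\le b$. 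The supports are still disjoint and locally finite. For $\lambda\le1$, the comparison $0.9\delta_2\le g\le1.1\delta_2$, completeness, the bound $|\Rm|\le1$, and the proof of Proposition~\ref{prop:jac-inj} all go through verbatim, because those arguments used only the three bounds by $b$. This gives $g_{r_0}=e^{2U_\lambda}\delta_2$.

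For the obstruction, an immersion of a neighborhood of the block $\lambda p_j+\lambda Q_0$ with $|A_F|\le A_0$ is handled by pulling back through $x\mapsto\lambda x$ and then rescaling the target. The map $\lambda^{-1}F(\lambda\,\cdot)$ is an isometric immersion of $e^{2\lambda^2u_j}\delta_2$ on $Q_0$, and its second fundamental form is bounded by $\lambda A_0$. Its conformal factor $w=\lambda^2u_j$ satisfies $|w|,|Dw|\le b$ and $\Delta w=\lambda^2a\rho_j$ on $Q_0$. Lemma~\ref{lem:jac-frame} then represents $\rho_j$ with cost at most $N(\lambda^2a)^{-1}(b+e^b\lambda A_0)^2$, and Proposition~\ref{prop:jac-density} with $S=j$ forces this to exceed $j$. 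Hence
\[
A_0>e^{-b}\lambda^{-1}\bigl(\lambda\sqrt{aj/N}-b\bigr),
\]
which tends to infinity with $j$ for fixed $\lambda$ and $N$. Each block has Euclidean diameter $\sqrt2\lambda$, so it lies in $B_{g_{r_0}}(q_j,r_0)$ with $q_j$ the block center. Given $N$ and $C_A$, we choose $j$ large and set $q=q_j$.

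The main obstacle is confirming that the rescaled metric keeps every intrinsic bound uniformly, especially the injectivity radius, and that the bookkeeping above is consistent. The curvature $K=-e^{-2U_\lambda}\Delta U_\lambda$ is still bounded by $e^{2b}b$. In the proof of Proposition~\ref{prop:jac-inj}, only $\|DU\|\le b$ entered the Christoffel bounds and only $|K|$ entered the Jacobi estimate, so both improve or stay the same. The remaining points to check are that restriction to the ball supplies a neighborhood of the closed block, which holds by taking $\lambda$ slightly smaller, say $\lambda=\min\{1,r_0/4\}$, and that Lemma~\ref{lem:jac-frame} applies to the rescaled immersion. The second holds because $x\mapsto\lambda x$ composed with the target scaling $\lambda^{-1}$ is an isometry between the relevant conformal metrics.
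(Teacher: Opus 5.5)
Your proposal is correct and follows essentially the paper's proof. The paper also takes blocks $a t^2(\chi v_j)(x/t)$ with $t$ comparable to $r_0$. It notes that the bounds on $U$, $DU$, $\Delta U$ by $b$ persist, so Section~\ref{sec:jac-metric}, including Proposition~\ref{prop:jac-inj}, applies unchanged, and then reruns the frame-identity cost argument on the scaled square. The differences are cosmetic: you rescale the target by $\lambda^{-1}$ and apply Lemma~\ref{lem:jac-frame} directly, whereas the paper applies the chain rule to the frame fields, which gives a slightly sharper $bt$ term instead of your $b/\lambda$. Your separate large-$r_0$ case is simply the case $\lambda=1$.
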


\begin{proof}
Keep the densities, cutoff, and constants $a,b$ from
Section~\ref{sec:jac-metric}. Choose a fixed
\[
 0<t<\min\{1,r_0/(2e^b)\}.
\]
Replace each block by
\[
 u_{j,t}(x)=at^2(\chi v_j)(x/t),\qquad
 U_t(x)=\sum_{j\ge1}u_{j,t}(x-p_j),\qquad g_{r_0}=e^{2U_t}\delta_2.
\]
The supports remain disjoint and locally finite. The chain rule gives
\[
 \|U_t\|_\infty\le bt^2,\quad
 \|DU_t\|_\infty\le bt,\quad
 \|\Delta U_t\|_\infty\le b.
\]
All the estimates in Section~\ref{sec:jac-metric}, including
Proposition~\ref{prop:jac-inj}, apply with the same upper bound $b$.
For $q_j=p_j+t(1/2,1/2)$, straight coordinate segments show that
$p_j+tQ_0$ lies strictly inside $B_{g_{r_0}}(q_j,r_0)$, since
\[
 d_{g_{r_0}}(q_j,p_j+tx)\le e^{bt^2}t/\sqrt2<r_0
 \qquad(x\in Q_0).
\]

Suppose an immersion of this ball has $|A_F|\le C_A$. On the scaled square,
let $E_1,E_2$ be its natural unit tangent fields. The proof of
Lemma~\ref{lem:jac-frame} gives the coordinate Lipschitz bound
$bt+e^{bt^2}C_A$. Write $\widehat E_i(x)=E_i(p_j+tx)$. The frame identity
and the chain rule yield
\[
 at^2\rho_j=\sum_{\alpha=1}^N
 J(\widehat E_2^\alpha,\widehat E_1^\alpha),\qquad
 \Lip(\widehat E_i^\alpha)\le t(bt+e^{bt^2}C_A).
\]
Dividing each scalar component by $t\sqrt a$ gives a representation of
$\rho_j$ with cost at most
\[
 \frac Na(bt+e^{bt^2}C_A)^2.
\]
Choose an integer $j$ larger than this fixed quantity.
Proposition~\ref{prop:jac-density} gives a contradiction.
\end{proof}

For example, the uniform conclusion stated in
\cite[Local Compression Corollary~1.F]{Gromov2023} would, in dimension
two and with its image-size parameter equal to one, give a fixed source
radius, a fixed finite target dimension, and a fixed bound on normal
curvatures under $|\sec|\le1$ and $\inj\ge1$.
For a symmetric normal-vector-valued bilinear form, polarization shows
that a bound $|A(v,v)|\le C_A$ for unit $v$ implies $|A|\le nC_A$.
Thus that uniform conclusion is incompatible with
Proposition~\ref{prop:jac-local-nonuniform}. Ordinary smooth local
isometric embedding, with a bound depending on the region and metric,
is not excluded. No result from~\cite{Gromov2023} is used in our proof.
Gromov also studies curvature restrictions through focal radii
in~\cite[Section~5]{Gromov2022}.

\begin{remark}
Yau later asked for bounded mean curvature under a Ricci lower bound
alone~\cite[pp.~244--245]{Yau2000} and~\cite[Section~7]{Yau2012}.
These formulations should be distinguished from Problem~52 and from
a hypothesis of bounded full curvature. The metrics in
Theorem~\ref{thm:jac-main} have bounded full curvature, hence also a
Ricci lower bound, and address these later formulations as well.
\end{remark}

For comparison, Petrunin's tubed-embedding theorem assumes positive
injectivity radius and bounds for every covariant derivative of curvature;
under those hypotheses, a Euclidean isometric tubed embedding exists
exactly when volume growth is uniformly polynomial
\cite[Section~2, Main theorem]{Petrunin2024}. A tubed embedding has a
uniformly thick tubular neighborhood, a requirement stronger than a
bound for the second fundamental form. The derivative hypotheses do
not hold for our metrics, as shown below.
Compactness results such as~\cite[Theorem~1.1]{Breuning2015} start with
existing immersions subject to extrinsic curvature and volume bounds.

\begin{remark}
The original metric and its Euclidean products have unbounded first
curvature derivatives.
Indeed, put $B_j=\|\partial_y\rho_j\|_\infty$ and define
\[
 P_j(x,y)=\int_0^x\rho_j(s,y)\,\dd s,\qquad T(x,y)=y.
\]
Then $J(P_j,T)=\rho_j$, $\Lip(T)=1$, and
$\Lip(P_j)\le\sqrt{4+B_j^2}$. Proposition~\ref{prop:jac-density}
therefore gives $B_j>\sqrt{j^2-4}$ for $j>2$. On the corresponding
square of the original metric, $\rho_j=-a^{-1}e^{2u_j}K_g$, whence
\[
 B_j\le a^{-1}e^{3b}\sup_{p_j+Q_0}|\nabla_gK_g|+4b.
\]
Consequently $\sup|\nabla_gK_g|=\infty$, and hence
$\sup|\nabla_g\Rm_g|=\infty$.

Unbounded derivatives alone are not the representation obstruction used
here. For instance, if $\rho(x,y)=f(x)$ with $\|f\|_\infty\le2$, then
$J(\int_0^x f(s)\,\dd s,y)=\rho$ has cost at most two, independently
of the derivatives of $f$. The construction requires the stronger
failure of bounded-cost representations by arbitrary finite sums.
\end{remark}

\appendix
\section{A direct proof of the two-dimensional dichotomy}
\label{app:jac-dichotomy}

We give a direct proof of the two-dimensional, single-pair special case
of~\cite[Lemma~3.3]{DKK2018}, with constants independent of the finite
target dimension.

We prove Lemma~\ref{lem:jac-dichotomy} by estimating the squared
deviations of horizontal difference quotients from the bottom-edge
average. Summing these deviations over all grid rows selects one adjacent
pair of squares on which the translation estimate holds uniformly.

\begin{proof}[Proof of Lemma~\ref{lem:jac-dichotomy}]
Fix $L\ge1$ and $0<\varepsilon<1$. Choose
\begin{equation}\label{eq:jac-explicit-parameters}
 \begin{split}
 M&=\left\lceil\frac{16L}{\varepsilon}\right\rceil,
 \qquad \varphi=\frac{\varepsilon^2}{256ML^2},\\
 K_0&=\max\left\{2,
       \left\lceil\frac{1024ML^2}{\varepsilon^2}\right\rceil\right\}.
 \end{split}
\end{equation}
In particular $0<\varphi<1$. Let $K\ge K_0$ be an integer, let $c>0$,
and let $h:[0,c]\times[0,c/K]\to\R^N$ be $L$-bilipschitz.
Put
\[
 r=\frac cK,\qquad \delta=\frac rM,\qquad
 \bar v=\frac{h(c,0)-h(0,0)}c.
\]
The Lipschitz upper bound gives $|\bar v|\le L$.
Suppose that the growth alternative~\eqref{eq:jac-dichotomy-growth}
fails. For integers $0\le\ell<KM$ and $0\le k<M$, define
\[
 v_{\ell,k}=\frac{h((\ell+1)\delta,k\delta)
                         -h(\ell\delta,k\delta)}{\delta}.
\]
All these edges have basepoints in the grid allowed by that alternative,
so
\begin{equation}\label{eq:jac-grid-upper}
 |v_{\ell,k}|\le(1+\varphi)|\bar v|.
\end{equation}
For each row, telescoping gives its average difference quotient
\[
 \mu_k=\frac1{KM}\sum_{\ell=0}^{KM-1}v_{\ell,k}
       =\frac{h(c,k\delta)-h(0,k\delta)}c.
\]
Comparing the endpoints with those on the bottom edge yields
\[
 |\mu_k-\bar v|\le\frac{2Lk\delta}{c}\le\frac{2L}{K}.
\]
The Euclidean inner-product identity and~\eqref{eq:jac-grid-upper}
therefore imply
\begin{align*}
 \frac1{KM}\sum_{\ell=0}^{KM-1}|v_{\ell,k}-\bar v|^2
 &=\frac1{KM}\sum_{\ell=0}^{KM-1}|v_{\ell,k}|^2
          -2\langle\mu_k,\bar v\rangle+|\bar v|^2\\
 &\le(2\varphi+\varphi^2)|\bar v|^2
          -2\langle\mu_k-\bar v,\bar v\rangle\\
 &\le L^2\left(3\varphi+\frac4K\right)=:\beta.
\end{align*}
Our parameter choices ensure that
\begin{equation}\label{eq:jac-grid-energy}
 \beta\le\frac{3\varepsilon^2}{256M}
             +\frac{\varepsilon^2}{256M}
       =\frac{\varepsilon^2}{64M}.
\end{equation}

For $0\le i\le K-2$, write
$Q_i=[ir,(i+1)r]\times[0,r]$ and define the total squared deviation
over the two adjacent squares by
\[
 \mathcal E_i=\sum_{k=0}^{M-1}
       \sum_{\ell=iM}^{(i+2)M-1}|v_{\ell,k}-\bar v|^2.
\]
Each fine edge is counted at most twice in $\sum_i\mathcal E_i$.
Summing the row estimates gives
\[
 \sum_{i=0}^{K-2}\mathcal E_i
 \le2\sum_{k=0}^{M-1}\sum_{\ell=0}^{KM-1}|v_{\ell,k}-\bar v|^2
 \le2KM^2\beta.
\]
Hence a single index $i$ satisfies
\begin{equation}\label{eq:jac-pair-energy}
 \mathcal E_i\le\frac{2K}{K-1}M^2\beta
       \le4M^2\beta\le\frac{M\varepsilon^2}{16}.
\end{equation}
Fix this index. For every grid point $z=(\ell\delta,k\delta)$ with
$iM\le\ell\le(i+1)M$ and $0\le k<M$, telescoping and
Cauchy--Schwarz give
\begin{align*}
 |h(z+r\mathbf e_1)-h(z)-r\bar v|^2
 &=\left|\delta\sum_{q=\ell}^{\ell+M-1}(v_{q,k}-\bar v)\right|^2\\
 &\le\delta^2M\sum_{q=\ell}^{\ell+M-1}|v_{q,k}-\bar v|^2\\
 &\le\delta^2M\mathcal E_i
 \le\frac{r^2\varepsilon^2}{16}.
\end{align*}
Even when $\ell=(i+1)M$, the last edge has index $(i+2)M-1$,
which is included in $\mathcal E_i$.

For any $x\in Q_i$, choose one of these grid points $z$ with
$|x-z|\le\sqrt2\delta$. On the top edge $x_2=r$, use the last
allowed row $z_2=(M-1)\delta$; no estimate on fine edges in the top
row is required. The function
\[
 \mathcal T(x)=h(x+r\mathbf e_1)-h(x)-r\bar v,\qquad x\in Q_i,
\]
is $2L$-Lipschitz. Consequently
\[
 |\mathcal T(x)|\le\frac{r\varepsilon}{4}+2L|x-z|
 \le\left(\frac14+\frac{\sqrt2}{8}\right)r\varepsilon
 <r\varepsilon.
\]
Since $r\bar v=K^{-1}(h(c\mathbf e_1)-h(0))$, this proves the translation
alternative~\eqref{eq:jac-dichotomy-good} on the entire closed square.
\end{proof}

The proof uses only the upper Lipschitz bound and never divides by
$|\bar v|$. All vector estimates use the Euclidean inner product, so no
constant depends on $N$. The lower bilipschitz bound is used later,
in the finite-tree argument of Section~\ref{sec:jac-analytic}, to
bound the number of consecutive growth steps. The order of choice is
$L,\varepsilon$, then $M,\varphi,K_0$, then $k_0$, and finally $K$;
in particular neither $M$ nor $\varphi$ depends on $K$.

\section*{Acknowledgments}
The author thanks Professor Bobo Hua for his support.

\section*{Use of AI tools}
The author used OpenAI's Codex to investigate candidate constructions,
check calculations, and solicit separate AI reviews of draft proofs. The
author determined the direction of the work, verified all mathematical
claims and arguments, prepared the final manuscript, and assumes full
responsibility for its contents.

\bibliographystyle{amsplain-fullnames}
\makeatletter
\renewcommand{\@biblabel}[1]{\@defaultbiblabelstyle{#1}}
\renewcommand{\bibsetup}{}
\makeatother
\bibliography{bounded-geometry-counterexample}
\end{document}